\renewcommand{\div}{\operatorname{div}}
\newcommand{\osc}{\operatorname{osc}}
\newcommand{\Tt}{{\mathbb{T}}}
 \newcommand{\Rr}{\mathbb R}
 \newcommand{\af}{\alpha}
 \newcommand{\ep}{\epsilon}
\newcommand{\be}{\beta}
 \newcommand{\ga}{\gamma}
 \newcommand{\de}{\delta}
\newcommand{\De}{\Delta}
  \newcommand{\lam}{\lambda}
\newcommand{\Ff}{\mathbb F}
\renewcommand{\div}{\operatorname{div}}
\newcommand{\tr}{\operatorname{Tr}}
\newcommand{\bx}{{\bf x}}
\newcommand{\bv}{{\bf v}}
\newtheorem{problem}{Problem}
\newtheorem{teo}{Theorem}[section]
\newtheorem{cor}{Corollary}[section]
\newtheorem{Lemma}{Lemma}[section]
\newtheorem{Proposition}{Proposition}[section]
\newtheorem{Remark}{Remark}[section]
\newtheorem{Assumption}{A}
\begin{document}

\title{Singular mean-field games}
\author[Cirant]{Marco Cirant}
\address{M. Cirant, Dipartimento di Matematica, Universit\`a di Padova, Via Trieste 63, 35121, Padova, Italy.}
\email{cirant@math.unipd.it}
\author[Gomes]{Diogo A. Gomes}
\address{D. Gomes, King Abdullah University of Science and Technology (KAUST), CEMSE Division, Thuwal 23955-6900, Saudi Arabia.}
\email{diogo.gomes@kaust.edu.sa}
\author[Pimentel]{Edgard A. Pimentel}
\address{E. Pimentel, Department of Mathematics, Universidade Federal de S\~ao Carlos, 13506-905, S\~ao Carlos, Brazil.}
\email{edgard@dm.ufscar.br}
\author[S\'anchez-Morgado]{H\'ector S\'anchez-Morgado } 
\address{H. S\'anchez-Morgado, Instituto de Matem\'aticas, Universidad Nacional Aut\'onoma de M\'exico. 
Cd. M\'exico 04510. M\'exico.} 
\email{hector@matem.unam.mx}

\thanks{M. Cirant is partially supported by the Fondazione CaRiPaRo Project "Nonlinear Partial Differential Equations: Asymptotic Problems and Mean-Field Games" and the INdAM-GNAMPA project "Fenomeni di segregazione in sistemi stazionari di tipo Mean Field Games a pi\`u popolazioni".	
}

\thanks{D. Gomes was partially supported by KAUST baseline and start-up funds.
}

\thanks{E. Pimentel was partially supported by FAPESP (Grant 2015/13011-6) and PPGM-UFSCar baseline funds.
}
        
\date{\today}

\maketitle

\begin{abstract}

\noindent Here, we prove the existence of smooth solutions for mean-field games with a singular mean-field coupling; that is,  a coupling in the Hamilton-Jacobi equation of the form $g(m)=-m^{-\af}$. We consider stationary and time-dependent settings. The function $g$  is monotone, but it is not bounded from below. With the exception of the logarithmic coupling, this is the first time that MFGs whose coupling is not bounded from below is examined in the literature.  This coupling arises in models where agents have a strong preference for low-density regions. Paradoxically, this causes the agents to spread and prevents the creation of solutions with a very-low density. To prove the existence of solutions, we consider an approximate problem for which the existence of smooth solutions is known. Then, we prove new a priori bounds for the solutions that show that $\frac 1 m$ is bounded. Finally, using a limiting argument, we obtain the existence of solutions. The proof in the stationary case relies on a blow-up argument and in the time-dependent case on new bounds for $m^{-1}$. 
\medskip

\noindent \textbf{Keywords}:  Mean-field games; Singular mean-field games; A priori estimates; Smooth solutions.

\medskip

\noindent \textbf{MSC(2010)}:  35A01, 35K61, 35B45.
\end{abstract}

\section{Introduction}\label{sec int}

Here,  we examine singular second-order reduced mean-field games (MFGs)  both in the stationary and time-dependent settings.
Our goal is to determine conditions that ensure the existence of classical solutions of the following problems: 

\begin{problem}[Stationary]
\label{P1sta}
Given $H:\Tt^d\times \Rr^d\to \Rr$, $H\in C^\infty$ and $\alpha>0$,  find $u,m\in C^\infty(\Tt^d)$, $m>0$, and $\overline{H} \in \Rr$ that solve
 the (stationary) MFG:
\begin{equation}\label{eq:mfgsta}
\begin{cases}
        -\Delta u(x)\,+\,H(x,Du(x))\,=\,\overline{H}\,-\,m^{-\af}(x)&\mbox{  in  }\Tt^d\\

        -\De m(x)-\div(D_pH(x,Du)m(x))=0&\mbox{ in }\Tt^d,
\end{cases}
\end{equation}
satisfying the following condition:
\begin{equation}\label{eq:itbcsta}
        \int_{\Tt^d}m(x)dx\,=\,1.
\end{equation} 
\end{problem}

\begin{problem}[Time-dependent]
\label{P1}
Given $H:\Tt^d\times \Rr^d\to \Rr$, $H\in C^\infty$, $\alpha>0$, and  $u^T, m^0 \in C^\infty(\Tt)$, $m^0>0$,  find $u,m\in C^\infty(\Tt^d\times [0,T])$, $m>0$, that solve
 the MFG:
\begin{equation}\label{eq:mfg}
\begin{cases}
-u_t(x,t)\,+\,H(x,Du(x,t))\,=\,\De u(x,t)\,-\,m^{-\af}(x,t)&\mbox{  in  }\Tt^d\times[0,T]\\

m_t(x,t)-\div(D_pH(x,Du)m(x,t))=\De m(x,t)&\mbox{ in }\Tt^d\times[0,T]
\end{cases}
\end{equation}
and satisfy  the  initial-terminal boundary conditions:
\begin{equation}\label{eq:itbc}
\begin{cases}
u(x,T)\,=\,u^T(x)&\mbox{ in }\Tt^d\\

m(x,0)\,=\,m^0(x)&\mbox{ in }\Tt^d.
\end{cases}
\end{equation} 
\end{problem}

The MFG\ \eqref{eq:mfg}-\eqref{eq:itbc} arises as a model for the interaction of a large number of rational agents that strongly prefer empty or low-density regions. More precisely, we let $W_s$ be a $d$-dimensional Brownian motion in a filtered probability
space $(\Omega, \Ff, P)$ and  let  $\mathcal{V}$ be a set of admissible
controls; that is,
bounded progressively measurable processes on $[t, T]$ with values in $\Rr^d$.
 Each agent seeks to optimize a  stochastic control problem with dynamics
\begin{align}
\label{dyn}
&d\bx=\bv ds+\sqrt{2} dW_s,\\\notag  &\bx(t)=x,  
\end{align} 
with $\bv\in \mathcal{V}$, 
and cost functional
\begin{equation}\label{soc}
J(x,t;\bv)= \mathbb{E}^{(x,t)}\left[\int_t^T L(\bx(s),\bv(s))-\frac
{ds}{(m(\bx(s),s))^\af} + u^T(\bx(T))\right]. 
\end{equation}
In the above cost,  the Lagrangian, $L,$ is given by 
\begin{equation}
  \label{hatele}
  L(x,v)\,:=\,\sup_{p\in\mathbb{R}^d}\left[ -p\,\cdot\,v\,-\,H(x,p)\right.].
\end{equation}
Under suitable regularity conditions for $m$, it is well known that  
\[
        u(x,t)\,=\,\inf_{\bv\in\mathcal{V}}J(\bx,t;\bv)
\]
solves the first equation in \eqref{eq:mfg}. Moreover, again under suitable regularity conditions, the optimal control 
is given in feedback form as
\[
\bv(s)=-D_pH(\bx(s), D_xu(\bx(s),s)).
\]
This, in turn, gives that the law, $m$,  of the diffusion \eqref{dyn} solves
the second equation in \eqref{eq:mfg}.

The stationary problem, Problem \ref{P1sta}, has a similar interpretation; it describes the ergodic problem associated with the ergodic cost
\begin{equation}\label{socsta}
J(x;\bv)= \lim_{T\to\infty}\frac{1}{T}\mathbb{E}^{(x,0)}\left[\int_0^T L(\bx(s),\bv(s))-\frac
{ds}{(m(\bx(s),s))^\af} \right].
\end{equation}
A solution $(u,\,\overline{H},\,m)$ to Problem \ref{P1sta} also describes the equilibrium configuration of the system, due to the ergodic (limiting) behavior encoded in \eqref{socsta}.

To investigate Problems \ref{P1sta} and \ref{P1}, we introduce two auxiliary, regularized MFGs. In the stationary setting, we examine the problem:

\begin{problem}[Stationary]
\label{P2sta}
Given $H:\Tt^d\times \Rr^d\to \Rr$, , $H\in C^\infty$, $\alpha>0$ and $\epsilon>0$  find $u^\epsilon,m^\epsilon\in C^\infty(\Tt^d)$, with $m^\epsilon>0$, and $\overline{H}^\epsilon$ solving
 the MFG:
\begin{equation}\label{mfgregsta}
\begin{cases}
- \Delta u^\epsilon + H(x, D u^\epsilon) =  \overline{H}^\epsilon - (m^\epsilon + \epsilon)^{-\alpha} &\;\;\;\;\;\mbox{in}\;\;\;\;\;\Tt^d\\
- \Delta m^\epsilon -{\rm div}(D_p H(x, D u^\epsilon) \, m^\epsilon) =  0& \;\;\;\;\;\mbox{in}\;\;\;\;\;\Tt^d,
\end{cases}
\end{equation}
satisfying the following condition:
\begin{equation}\label{eq:itbcregsta}
        \int_{\Tt^d}m^\epsilon(x)dx\,=\,1.
\end{equation} 
\end{problem}

In the time-dependent setting, we study the following problem:

\begin{problem}[Time-dependent]
\label{P2}
Given $H:\Tt^d\times \Rr^d\to \Rr$, , $H\in C^\infty$, $\alpha>0$, $\epsilon>0$, and  $u^T, m^0 \in C^\infty(\Tt)$,
$m^0>0$,  find $u^\epsilon,m^\epsilon\in C^\infty(\Tt^d\times [0,T])$, $m^\epsilon>0$, that solve
 the MFG:
\begin{equation}\label{eq:mfgreg}
        \begin{cases}
                -u^\ep_t(x,t)+H(x,Du^\ep(x,t))=\De u^\ep(x,t)-(m^\ep(x,t)+\ep)^{-\af}&\mbox{
in }\Tt^d\times[0,T]\\

                m^\ep_t(x,t)-\div(D_pH(x,Du^\ep)m(x,t))=\De m^\ep(x,t)&\mbox{
in }\Tt^d\times[0,T],
        \end{cases}
\end{equation}
where
\begin{equation}\label{eq:itbcreg}
        \begin{cases}
                u^\ep(x,T)\,=\,u^T(x)&\mbox{ in }\Tt^d\\

                m^\ep(x,0)\,=\,m^0(x)&\mbox{ in }\Tt^d.
        \end{cases}
\end{equation}
\end{problem}

The existence theory for MFGs has seen significant advances in recent years. A typical 
MFG with a local coupling is
\[
\begin{cases}
-u_t+\frac{|Du|^2}{2}=\Delta u +g(m). \\
m_t-\div(m Du)=\Delta m. 
\end{cases}
\]
In \cite{ll1, ll2,ll3}, the authors proved several a priori estimates for weak solutions of MFGs. Because of the quadratic structure, the preceding case was one of the first MFGs to be studied. For $g$ increasing and bounded from below,  the proof of the existence of a solution was outlined in \cite{lcdf} and detailed in  \cite{CLLP} using the Hopf-Cole transformation (also see \cite{gueant3} and   \cite{MR2974160}).  General parabolic MFGs with a nonlocal coupling were addressed in \cite{lcdf} (see also \cite{cardaliaguet}). The existence of weak solutions in the case of local couplings bounded by below and of polynomial growth in $g$\ was considered in \cite{cgbt}, in the variational setting, and in \cite{porretta2}, using general partial differential equations (PDE)\ methods (see  \cite{porretta} for the planning problem). Finally, the parabolic case with a polynomial nonlinearity $g$ was examined in \cite{GPM2} for subquadratic Hamiltonians and in \cite{GPM3} for superquadratic Hamiltonians. In all these cases, the mean-field couplings were non-singular for $m\geq 0$\ and bounded from below. Explicit solutions for MFGs have been studied in \cite{GNP1} and \cite{GNP2}. The study of congestion problems provides examples of singular MFGs. A model problem is
\[
\begin{cases}
-u_t+\frac{|Du|^2}{2 m^\alpha}=\Delta u, 
m_t-\div(m^{1-\alpha} Du)=\Delta m
\end{cases}
\]
with $\alpha>0$. In the stationary case, the congestion problem was investigated in \cite{GMit}, and in the time-dependent case
in \cite{GVrt2} and \cite{Graber2} for a small terminal time, $T$. For
the general case of stationary MFGs in the presence of congestion, we refer the reader to \cite{EvangelistaGomes}.

In the problems discussed above, the Lagrangian corresponding to the Hamilton-Jacobi equation is bounded from below. Thus, it is relatively easy to get lower bounds for $u$ using the maximum principle or, alternatively, a control theory interpretation of the problem. Bounds for $u$ in $L^\infty$ are essential to prove the regularity of solutions. Thus,  if the coupling is not bounded from below, the regularity problem poses substantial challenges. Prior to this paper, the only known existence results for MFGs with singularities that are not bounded from below were those for a logarithmic nonlinearity, $g(m)=\ln m$. This case was examined in the stationary setting in  \cite{GPatVrt}, \cite{GM}, and \cite{PV15},  in the time-dependent setting in \cite{GPim2}, and in a specific case in \cite{gueant3}. Additional examples were discussed in \cite{AFG}, \cite{RFG2}, \cite{GNP1}, \cite{GNP2}, and \cite{TheBook}.
However, the techniques used to study logarithmic nonlinearities do not apply to power-like nonlinearities as the ones in \eqref{eq:mfg}, except in  the one-dimensional case, where the methods in \cite{GNP1} and \cite{GNP2} can be adapted accordingly.  

In addition to its interest for the MFG theory, models similar to \eqref{eq:mfg} appear in  cavitation,  free boundary,
and obstacle problems. A typical problem is
\begin{equation}\label{mildell}
        \De v(x) \,=\, v^{\sigma-1}\mbox{ in } B_1,
\end{equation}
where $\sigma\in(0,1)$. For results in this direction, we refer the reader to \cite{altphilips}, \cite{mont1}, \cite{mont2}, and \cite{delpino}.

In Section \ref{sec
maop}, we discuss the main technical assumptions under which our results are valid (Assumptions A\ref{sub1}-A\ref{alphastat}). 
 For example,  we observe that the following Hamiltonian
satisfies these assumptions:
\[
        H(x,p)\,=\,a(x)\left(1\,+\,|p|^2\right)^\frac{\gamma}{2}\,+\,V(x),
\]
where  $a,\,V\,\in\,\mathcal{C}^\infty(\Tt^d)$, with $a>0$ and $\gamma$ in a suitable range to be discussed later (see Section \ref{sec maop}). Our first main result is the following theorem:

\begin{teo}[Stationary case]\label{thm_stationary}
Suppose Assumptions A\ref{sub1}-A\ref{trace} and A\ref{alphastat} hold (see Section
\ref{sec maop}). Then, there exists a unique solution $(u,\overline{H}, m)$ of Problem \ref{P1sta}. 
\end{teo}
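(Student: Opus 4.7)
The plan is to construct a classical solution of Problem \ref{P1sta} as a $C^\infty$-limit of solutions $(u^\epsilon, \overline{H}^\epsilon, m^\epsilon)$ of the regularized Problem \ref{P2sta}. For each fixed $\epsilon>0$, the coupling $-(m^\epsilon+\epsilon)^{-\alpha}$ is smooth in $m^\epsilon$, monotone increasing, and bounded from below by $-\epsilon^{-\alpha}$, so the existence of a smooth solution of \eqref{mfgregsta}--\eqref{eq:itbcregsta} falls inside the standard theory for stationary MFGs with non-singular monotone couplings. The strategy is then to establish $\epsilon$-independent a priori estimates strong enough to extract a $C^\infty$ limit and, most importantly, to prevent $m^\epsilon$ from collapsing to zero.

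First, I would derive uniform integral estimates. Multiplying the Hamilton-Jacobi equation of \eqref{mfgregsta} by $m^\epsilon$, the Fokker-Planck equation by $u^\epsilon$, integrating by parts on $\Tt^d$ and combining the two identities produces
\begin{equation*}
\overline{H}^\epsilon \,=\, \int_{\Tt^d}\bigl[(m^\epsilon+\epsilon)^{-\alpha}\,m^\epsilon \,+\, L(x,-D_pH(x,Du^\epsilon))\,m^\epsilon\bigr]\,dx.
\end{equation*}
Together with the coercivity of $H$ (Assumption A\ref{sub1}) and the remaining structural conditions A\ref{sub1}--A\ref{trace}, this yields $\epsilon$-uniform bounds on $\overline{H}^\epsilon$, on $\int_{\Tt^d}(m^\epsilon+\epsilon)^{-\alpha}m^\epsilon\,dx$ and on $\|m^\epsilon\|_{L^q(\Tt^d)}$ for some $q>1$, from which standard Bernstein and Evans-type computations give uniform Lipschitz bounds on $u^\epsilon$, an $L^\infty$ upper bound on $m^\epsilon$, and partial higher regularity, all independent of $\epsilon$.

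The core of the argument, and the principal obstacle, is to upgrade this to a uniform positive lower bound $m^\epsilon \ge c_0 > 0$; this is exactly where the blow-up argument enters and where the restriction on $\alpha$ in Assumption A\ref{alphastat} is used. I would argue by contradiction, assuming $\mu_\epsilon := \min_{\Tt^d} m^\epsilon \to 0$ along a subsequence with minima achieved at points $x_\epsilon \in \Tt^d$. Choose scales $\lambda_\epsilon \to 0$ and an exponent $\sigma$ calibrated to the degeneration rate and the growth of $H$, and set
\begin{equation*}
\widehat m^\epsilon(y) := \mu_\epsilon^{-1}\,m^\epsilon(x_\epsilon + \lambda_\epsilon y), \qquad \widehat u^\epsilon(y) := \lambda_\epsilon^{-\sigma}\bigl(u^\epsilon(x_\epsilon + \lambda_\epsilon y) - u^\epsilon(x_\epsilon)\bigr).
\end{equation*}
The scaling is tuned so that at $y=0$ the singular term $(m^\epsilon+\epsilon)^{-\alpha}$ reappears at unit size in the rescaled Hamilton-Jacobi equation posed on $B_{R/\lambda_\epsilon}(0)$, while $\overline{H}^\epsilon$ and the lower-order contributions vanish in the limit $\lambda_\epsilon \to 0$. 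The previous step transfers into $C^{k,\theta}_{\mathrm{loc}}(\Rr^d)$ bounds on the rescaled pair, and a diagonal extraction yields an entire limit $(\widehat u, \widehat m)$ on $\Rr^d$ of a reduced homogeneous system of the form
\begin{equation*}
-\Delta\widehat u + \widehat H(D\widehat u) = -\widehat m^{-\alpha}, \qquad -\Delta\widehat m - \div(D_p\widehat H(D\widehat u)\,\widehat m) = 0,
\end{equation*}
with $\widehat m(0)=1$, $\widehat m \ge 0$, and controlled growth. The admissible range of $\alpha$ in Assumption A\ref{alphastat} is precisely the one that rules out nontrivial entire solutions of this normalized system via a Liouville-type rigidity statement, producing the desired contradiction. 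Selecting the rescaling exponents so that the limit system is simultaneously non-degenerate and inadmissible is the delicate point.

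With the bound $m^\epsilon \ge c_0$ in hand, $(m^\epsilon+\epsilon)^{-\alpha}$ is uniformly bounded, and an elliptic bootstrap on both equations of \eqref{mfgregsta} promotes the a priori control to $\epsilon$-uniform $C^{k,\theta}$ estimates of arbitrary order. Extracting a $C^\infty$-convergent subsequence yields a classical solution $(u, \overline{H}, m)$ of \eqref{eq:mfgsta}--\eqref{eq:itbcsta} with $m \ge c_0 > 0$; normalizing $u$ by $\int_{\Tt^d} u\,dx = 0$ fixes the additive constant. Uniqueness is the classical Lasry-Lions monotonicity argument: subtracting two solutions, multiplying the Hamilton-Jacobi difference by $m_1-m_2$ and the Fokker-Planck difference by $u_1-u_2$, then subtracting and integrating by parts produces an identity whose two sides have opposite signs thanks to the strict convexity of $H$ in $p$ on one side and the strict monotonicity of $m \mapsto -m^{-\alpha}$ on the other, forcing $m_1 \equiv m_2$, $Du_1 \equiv Du_2$, and hence $\overline{H}_1 = \overline{H}_2$.
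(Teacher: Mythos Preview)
Your outline follows the paper's architecture (regularize, estimate uniformly, blow up, pass to the limit, Lasry--Lions uniqueness), but there are two genuine gaps in the mechanism.

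\textbf{First}, you claim that the first-order energy identities plus ``standard Bernstein and Evans-type computations'' yield uniform Lipschitz bounds on $u^\epsilon$ \emph{before} any lower bound on $m^\epsilon$ is available. This is circular: Bernstein-type gradient estimates for $-\Delta u^\epsilon + H(x,Du^\epsilon) = \overline H^\epsilon - (m^\epsilon+\epsilon)^{-\alpha}$ require control on the right-hand side in a high $L^p$ (or $L^\infty$), and at this stage you only have $\|(m^\epsilon+\epsilon)^{-\alpha}\|_{L^1}\le C$ from the first-order estimate. The quantity $\int (m^\epsilon+\epsilon)^{-\alpha}m^\epsilon$ that you record is not the relevant one, and upper $L^q$ bounds on $m^\epsilon$ say nothing about $m^\epsilon$ being small. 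In the paper, gradient bounds on $u^\epsilon$ come \emph{after} the lower bound $m^\epsilon+\epsilon\ge C$, precisely because that is what makes the coupling bounded.

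\textbf{Second}, your blow-up step is not the paper's, and as written it is incomplete. You propose to pass to an entire limit $(\widehat u,\widehat m)$ on $\Rr^d$ and invoke a Liouville theorem for the limiting MFG system; but no such Liouville result is available or proved, and the ``$C^{k,\theta}_{\mathrm{loc}}$ bounds transferred from the previous step'' rely on the Lipschitz control you do not yet have. The paper's argument is different and more robust: after rescaling with $a^\epsilon=(\eta^\epsilon)^{\alpha/\gamma'}$ the rescaled density satisfies $\mu\ge 1$, so the rescaled HJ right-hand side is bounded pointwise and \emph{local} integral Bernstein estimates apply directly to the rescaled equation; Harnack's inequality on the rescaled Kolmogorov equation then gives $\mu\le C$ on $B_1$, hence $\int_{B_1}\mu^{-p}\ge\delta_p>0$. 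No limit is taken. The contradiction comes from a quantitative scaling identity: if one knows a priori that $\int_{\Tt^d}(m^\epsilon+\epsilon)^{-p}\le K$ for some $p>\alpha d/\gamma'$, then $\int_{\mathcal T^\epsilon}\mu^{-p}=\eta^{\,p-\alpha d/\gamma'}\!\int_{\Tt^d}(m^\epsilon+\epsilon)^{-p}\to 0$, which is impossible. The role of Assumption A\ref{alphastat} is exactly to guarantee that the available integrability exponent---$p=\alpha$ from the first-order estimate when $\gamma<d/(d-1)$, or $p=(\alpha-1)\tfrac{d}{d-2}$ from the second-order estimate plus Sobolev when $\gamma$ is larger---exceeds the threshold $\alpha d/\gamma'$; it is not a Liouville-type non-existence range. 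You never isolate the integrability of $(m^\epsilon+\epsilon)^{-1}$ as the key quantity, and that is the heart of the proof.
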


To handle the singular coupling in the stationary case, we study Problem \ref{P2sta} and use a blow-up method to obtain that $m^\epsilon$ is bounded from below by a constant independent on $\epsilon$. Our method is related to the one in \cite{marcoCPDE}, in the setting of \textit{focusing} MFG, where the coupling is a function, which is unbounded from below as $m \to \infty$: in that case, it was shown that some integrability of $m$ is enough to obtain boundedness of $m$ in $L^\infty$. Here, a similar procedure applies: the integrability of $m^{-1}$ implies bounds from above for $m^{-1}$, which allows passing to the limit in Problem \ref{P2sta}. Second-order estimates that use the monotonicity of $-m^{-\alpha}$ give that  $(m^\epsilon)^{-1}$ is bounded in some $L^p$ space. In Section \ref{elemeststa}, we establish preliminary estimates. Then, in Section \ref{sec_proofsta}, we conclude the proof of Theorem \ref{thm_stationary}.
\vspace{.2in}

Our second main result concerns the time-dependent setting:

\begin{teo}[Time-dependent case]\label{mainteo}
        Suppose Assumptions A\ref{sub1}-A\ref{sub2} and A\ref{gamma} hold (see Section
\ref{sec maop}). Then, there exists a unique solution $(u,m)$ to Problem \ref{P1}. 
\end{teo}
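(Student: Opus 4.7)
I would mirror the stationary argument and use the regularized Problem \ref{P2} as the starting point. For each $\ep>0$, the coupling $-(m+\ep)^{-\af}$ is smooth and bounded below by $-\ep^{-\af}$, so classical existence theory (e.g.\ the methods of \cite{GPM2, GPM3}) produces a smooth solution $(u^\ep,m^\ep)$ of \eqref{eq:mfgreg}--\eqref{eq:itbcreg}, and the whole problem reduces to obtaining a priori estimates uniform in $\ep$ together with compactness. A first batch follows from the Lasry--Lions monotonicity test: multiply the HJ equation by $m^\ep-\mu$ and the Fokker--Planck equation by $u^\ep-\phi$ for a suitable smooth reference pair $(\mu,\phi)$, subtract, integrate on $\Tt^d\times[0,T]$, and use the monotonicity of $s\mapsto -(s+\ep)^{-\af}$ to obtain the bound
\[
\int_0^T\!\!\int_{\Tt^d}\bigl[(m^\ep+\ep)^{-\af}-(\mu+\ep)^{-\af}\bigr](m^\ep-\mu)\,dxdt\leq C.
\]
Combined with Bernstein-type arguments under Assumption A\ref{gamma}, this supplies $L^p$-control of $Du^\ep$ and, through the HJ equation, of $\Delta u^\ep$ in appropriate spaces.

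\textbf{Uniform lower bound on $m^\ep$.} The heart of the proof is the announced new bound on $(m^\ep)^{-1}$. I would test the Fokker--Planck equation against $(m^\ep+\ep)^{-q}$ for $q>1$: writing $v^\ep:=(m^\ep+\ep)^{(1-q)/2}$, integration by parts yields
\[
\frac{1}{q-1}\partial_t\!\int_{\Tt^d}(m^\ep+\ep)^{1-q}dx+\frac{4q}{q-1}\int_{\Tt^d}|Dv^\ep|^2\,dx+q\!\int_{\Tt^d}\diver\!\bigl(D_pH(x,Du^\ep)\bigr)(m^\ep+\ep)^{1-q}dx=0.
\]
The divergence term is absorbed using the $L^p$-bounds on $Du^\ep$ and $\Delta u^\ep$ from the previous step together with the Sobolev embedding applied to $v^\ep$, which produces a uniform control on $\int_0^T\!\int(m^\ep+\ep)^{1-q}$ for some initial exponent $q$. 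A Moser-type iteration, feeding the improved integrability back into the same identity with larger $q$, then upgrades this to the uniform pointwise bound $(m^\ep+\ep)^{-1}\leq C$, so that $m^\ep\geq 1/C>0$ uniformly in $\ep$.

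\textbf{Passage to the limit, uniqueness, and main obstacle.} Once $m^\ep\geq 1/C$ uniformly, the right-hand side of the HJ equation is uniformly bounded and smooth, and the standard parabolic Schauder theory yields uniform $C^{2+\be,1+\be/2}$-bounds on both $u^\ep$ and $m^\ep$. A diagonal extraction produces a smooth limit $(u,m)$ solving Problem \ref{P1}, and uniqueness follows from the classical Lasry--Lions argument using strict monotonicity of $-m^{-\af}$. The delicate point is closing the Moser iteration: the divergence term $\diver D_pH(x,Du^\ep)$ involves $D^2u^\ep$, whose regularity depends on the right-hand side of HJ, which is precisely the quantity $(m^\ep+\ep)^{-\af}$ that we are trying to control. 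Breaking this loop through a careful interpolation between the Lasry--Lions energy bound, the iterated integrability of $(m^\ep)^{-1}$, and parabolic regularity for $u^\ep$ is what dictates the admissible range of $\af$ encoded in Assumption A\ref{gamma}, and constitutes the principal technical obstacle.
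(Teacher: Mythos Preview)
Your outline has the right skeleton (regularize, get a priori estimates uniform in $\ep$, pass to the limit, uniqueness via Lasry--Lions), but it misses the central mechanism the paper uses and, as a result, the circular loop you correctly flag does not get closed.

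Two concrete gaps. First, in the Fokker--Planck test you produce the term $\diver\!\bigl(D_pH(x,Du^\ep)\bigr)$, which drags in $D^2u^\ep$ and creates the circularity. The paper avoids this entirely: testing against $(m^\ep+\ep)^{-\beta}$ and integrating by parts \emph{once} leaves the cross term $\int D_pH\cdot Dm\,(m+\ep)^{-\beta-1}$, which is absorbed by Cauchy--Schwarz into the good gradient term plus $\int |D_pH|^{\gamma'}$. The latter is controlled by the first-order estimate $\int_0^T\!\int H\,dx\,dt\le C$ (Proposition~\ref{prop p1}); no second derivatives of $u^\ep$ are needed, and one obtains $\|(m^\ep+\ep)^{-1}\|_{L^\infty_tL^r_x}\le C_r$ for every $r$ (Theorems~\ref{em0} and~\ref{em}). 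Second, and more importantly, you never explain how to get $\|Du^\ep\|_{L^\infty}$ or even $\|u^\ep\|_{L^\infty}$ when the right-hand side of the Hamilton--Jacobi equation is unbounded from below. ``Bernstein-type arguments'' do not apply here without an a priori $L^\infty$ bound on the forcing. The paper's key device is the \emph{nonlinear adjoint method}: one introduces $\rho$ solving $\rho_t-\Delta\rho-\diver(D_pH\,\rho)=0$ with $\rho(\cdot,\tau)=\delta_{x_0}$, derives a representation formula for $u^\ep(x_0,\tau)$ and for $u^\ep_\xi(x_0,\tau)$, and estimates $\int H\rho$ and $\int g_\xi\rho$ in terms of $\|(m+\ep)^{-\alpha}\|_{L^\infty_tL^p_x}$ (already bounded) and Sobolev norms of $\rho^{\nu/2}$. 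This yields $\|Du^\ep\|_{L^\infty}\le C+C\|Du^\ep\|_{L^\infty}^{2(\gamma-1)}$, which closes precisely because $2(\gamma-1)<1$ under A\ref{gamma}. Only \emph{after} the Lipschitz bound does the paper obtain the pointwise lower bound on $m^\ep$, via the Hopf--Cole substitution $v^\ep=\ln(m^\ep+\ep)$.

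Two smaller points: your Moser iteration, as written, would need uniform control of the drift to reach $L^\infty_{t,x}$, which you do not have at that stage (the paper's iteration only reaches $L^\infty_tL^r_x$ for all $r$); and Assumption~A\ref{gamma} restricts $\gamma$, not $\alpha$ --- in the time-dependent problem there is no restriction on $\alpha$, and the role of A\ref{gamma} is exactly to make the exponent $2(\gamma-1)$ in the adjoint-method inequality sublinear.
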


To prove Theorem \ref{mainteo}, we use a limiting argument. For that,  we rely on the properties of solutions to Problem \ref{P2}.  The existence of solutions to Problem \ref{P2}
follows from standard arguments, see, for example, \cite{GPM2}
or \cite{GPM3}. Here, we  obtain a priori estimates for the solutions $(u^\epsilon,m^\epsilon)$
to Problem \ref{P2} that are uniform in $\epsilon$. In Section \ref{hle}, we prove
our main estimate, which gives bounds for the coupling in Problem \ref{P2},
 $-(m^\epsilon+\epsilon)^{-\alpha}$,
that are uniform in $\epsilon$. Subsequently, in Sections \ref{rfp} and \ref{sec:reg-adj},
we examine the Fokker-Planck equation and the Hamilton-Jacobi equation in
\eqref{eq:mfgreg}. As remarked previously, a critical point in our estimates for the Hamilton-Jacobi equation
is that the coupling in Problem \ref{P2} is not bounded from below uniformly
in $\epsilon$. Thus, to obtain bounds for $u^\epsilon$ is $L^\infty$, we
use a delicate argument based on the nonlinear adjoint method introduced in \cite{E3} (also see \cite{T1}). Finally, in Section \ref{liphj}, we give uniform Lipschitz bounds for $u^\epsilon$. These
estimates ensure the compactness of the solution $(u^\epsilon,m^\epsilon)$.
Thus, by extracting a subsequence, we have\[
        (u^\epsilon,m^\epsilon)\,\to\,(u,m),
\]
as $\epsilon\to 0$, for some $(u,m)$ that solves
\eqref{eq:mfg}-\eqref{eq:itbc} in the weak sense
and inherits the regularity of $(u^\epsilon,m^\epsilon)  $. In Section \ref{liphj},
we conclude the Theorem \ref{mainteo} along these lines. 

\noindent {\bf Acknowledgments} The results in this paper were partially obtained during scientific visits of the authors to PPGM-UFSCar and IMPA (Brazil); the authors are grateful for the kind hospitality received from these Institutions.

\section{Main assumptions}\label{sec maop}

Here, we discuss the main assumptions used in this paper. Our first assumption,
stated next, requires that the Hamiltonian satisfies standard growth and coercivity properties; these are similar to the ones in the literature. 

\begin{Assumption}\label{sub1}The Hamiltonian $H:\Tt^d\times\Rr^d\to\Rr^d$ is smooth. Also, for fixed $x$, $p\mapsto H(x,p)$ is a strictly convex function. In addition,  
                there are constants, $C_1,\,C_2>0,$ such that
                \[
        \frac{1}{C_2}|p|^\ga\,-\,C_1\,\le\,     H(x,p)\,\le\,C_1\,+\,C_2|p|^\ga,
                \]
and, without loss of generality, we suppose that $H(x,p)\ge 0$. 
In addition, we have
                \[
                        \left|D_pH(x,p)\right|\,\le\,C_1\,+\,C_2|p|^{\ga-1}
                \]
       and 
                \[
                        \left|D_xH(x,p)\right|\,\le\,C_1\,+\,C_2 H(x,p).
                \]
        
\end{Assumption}

The next assumption requires a lower bound for the Lagrangian given by \eqref{hatele}.  We recall that  if $v=-D_pH(x,p)$, then
\[
L(x,v)=D_pH(x,p)\cdot p\,-\,H(x,p)\,.
\]

\begin{Assumption}\label{sub2}
        There are constants, $C_1,\,C_2>0,$ such that
        \[
            D_pH(x,p)\cdot p\,-\,H(x,p)\,\ge\,\frac{H(x,p)}{C_2}\,-\,C_1.
        \]
\end{Assumption}

Our next assumption plays a major role in the second-order estimates for the stationary problem.

\begin{Assumption}\label{trace}
For every $\delta\in(0,1)$, there exists $C_\delta>0$ so that
\[
        \tr(D^2_{xp}H(x,p)M)\,\leq\,\delta\tr(D^2_{pp}H(x,p)M^2)\,+\,C_\delta H(x,p),
\]
for any matrix $M$.
\end{Assumption}

For the time-dependent problem, our main technical condition concerns the growth of the Hamiltonian.
\begin{Assumption}\label{gamma}
We assume the Hamiltonian's growth parameter, $\gamma$, satisfies
\[
        1\,<\,\ga\,<\,\frac{d+2}{d+1}.
\]
\end{Assumption}
The preceding assumption is used together with the non-linear adjoint method to get bounds for $u$\ in $L^\infty$. The main reason for this constraint is that the non-linearity is not bounded from below. Thus, proving lower bounds for $u$, which is usually  an immediate corollary of the maximum principle or of the optimal control representation, becomes a non-trivial task.

For the stationary problem, we need the following assumption:

\begin{Assumption}\label{alphastat}
We assume that $\alpha > \bar{\alpha}_{d,\gamma}$, where
\[
\bar{\alpha}_{d,\gamma} = 
\begin{cases}
        0 &     \text{if $\gamma < \frac{d}{d-1}$}, \\
        1 &     \text{if $\gamma \ge 2$ and $d = 2$}, \\
        \max\left(\frac{\gamma}{\gamma(3-d) + d-2},1\right) &   \text{if $\frac{d}{d-1} \le \gamma < \frac{d-2}{d-3}$ and $d \ge 3$}.
\end{cases}
\]
\end{Assumption}
Note that $\bar{\alpha}_{N,\gamma} \to \infty$ as $\gamma \to ((d-2)/(d-3))^-$. In the next Section, we examine the stationary case and prove Theorem \ref{thm_stationary}.

\section{The stationary case}\label{sec_stationary}

The proof of Theorem \ref{thm_stationary} relies on a limiting argument. In turn, it depends on a priori estimates for the solutions of Problem \ref{P2sta}, uniform in $\epsilon$. We produce these estimates by using a blow-up method, see \cite{marcoCPDE}. Preliminary results, central to our argument, are developed in what follows.

\subsection{Elementary estimates}\label{elemeststa}

We start by proving a first-order estimate for solutions of Problem \ref{P2}.

\begin{Lemma}[First-order estimates]\label{lem_foesta}Let $(u^\epsilon,\overline{H}^\epsilon, m^\epsilon)$ be a solution to \eqref{mfgregsta}. Suppose Assumptions A\ref{sub1}-A\ref{sub2} hold. Then, there exists a constant, $C > 0,$ such that
\begin{equation}\label{stat_first_ord}
\int_{\Tt^d} \frac{1}{(m^\epsilon + \epsilon)^\alpha} \, dx +  \int_{\Tt^d} H(x, D u^\epsilon)  \, dx + \int_{\Tt^d} H(x, D u^\epsilon) m^\epsilon  \, dx + |\bar{H}^\epsilon| \le C.
\end{equation}
\end{Lemma}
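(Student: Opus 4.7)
The plan is to extract two integral identities for $\overline{H}^\epsilon$ by testing the Hamilton--Jacobi equation in \eqref{mfgregsta} against the constant $1$ and against $m^\epsilon$, and then to combine them using Assumption~A\ref{sub2} and the monotonicity of $g(m) = -m^{-\alpha}$.

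Integrating the HJB over $\Tt^d$, using $\int \Delta u^\epsilon\,dx = 0$ and $\int m^\epsilon\,dx = 1$, gives
\[
\overline{H}^\epsilon \;=\; \int_{\Tt^d} H(x, Du^\epsilon)\,dx \;+\; \int_{\Tt^d}(m^\epsilon + \epsilon)^{-\alpha}\,dx, \qquad (\ast)
\]
so in particular $\overline{H}^\epsilon \ge 0$ because $H \ge 0$ by A\ref{sub1}. Next, testing the HJB against $m^\epsilon$ and using the Fokker--Planck equation to rewrite $-\int (\Delta u^\epsilon)\, m^\epsilon\,dx = -\int D_pH(x, Du^\epsilon)\cdot Du^\epsilon\, m^\epsilon\,dx$ produces on the left-hand side the Legendre combination $D_pH\cdot Du^\epsilon - H = L(x, -D_pH(x, Du^\epsilon))$, yielding
\[
\overline{H}^\epsilon \;=\; \int_{\Tt^d}(m^\epsilon + \epsilon)^{-\alpha}\, m^\epsilon\,dx \;-\; \int_{\Tt^d} L(x, -D_pH(x, Du^\epsilon))\, m^\epsilon\,dx. \qquad (\ast\ast)
\]

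Subtracting $(\ast)$ from $(\ast\ast)$ cancels $\overline{H}^\epsilon$ and produces the key identity
\[
\int_{\Tt^d} H(x, Du^\epsilon)\,dx + \int_{\Tt^d}(m^\epsilon + \epsilon)^{-\alpha}(1 - m^\epsilon)\,dx + \int_{\Tt^d} L(x, -D_pH(x, Du^\epsilon))\, m^\epsilon\,dx \;=\; 0.
\]
Applying A\ref{sub2} in the form $L \ge H/C_2 - C_1$ and using $\int m^\epsilon = 1$ then delivers
\[
\int_{\Tt^d} H\,dx \;+\; \frac{1}{C_2}\int_{\Tt^d} H\, m^\epsilon\,dx \;+\; \int_{\Tt^d}(m^\epsilon + \epsilon)^{-\alpha}(1 - m^\epsilon)\,dx \;\le\; C_1.
\]

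The main obstacle is that the last integral is sign-indefinite, so one cannot directly read off a uniform bound on $\int (m^\epsilon + \epsilon)^{-\alpha}\,dx$; the naive pointwise bound $(m+\epsilon)^{-\alpha} m \le (m+\epsilon)^{1-\alpha}$ fails, since for $\alpha > 1$ it diverges as $\epsilon \to 0$. To bypass this, note that on $\{m^\epsilon \ge 1\}$ one has $(m^\epsilon + \epsilon)^{-\alpha} \le 1$, so the negative contribution is controlled by $\int_{\{m^\epsilon \ge 1\}}(m^\epsilon - 1)\,dx \le \int m^\epsilon\,dx = 1$; moving it to the right-hand side bounds $\int_{\{m^\epsilon < 1\}}(m^\epsilon + \epsilon)^{-\alpha}(1 - m^\epsilon)\,dx$ uniformly. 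A dyadic split at $m^\epsilon = 1/2$ then completes the extraction: on $\{m^\epsilon \le 1/2\}$ the factor $1 - m^\epsilon \ge 1/2$ upgrades the weighted bound to a bound on $\int_{\{m^\epsilon \le 1/2\}}(m^\epsilon + \epsilon)^{-\alpha}\,dx$, while on $\{m^\epsilon > 1/2\}$ one trivially has $(m^\epsilon + \epsilon)^{-\alpha} < 2^\alpha$. Inserting the resulting uniform bound on $\int (m^\epsilon + \epsilon)^{-\alpha}\,dx$ back into $(\ast)$ controls $\overline{H}^\epsilon$ from above; combined with $\overline{H}^\epsilon \ge 0$ and the already established bounds on $\int H\,dx$ and $\int H m^\epsilon\,dx$, this gives \eqref{stat_first_ord}.
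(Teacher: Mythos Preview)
Your proof is correct and follows the same opening moves as the paper---integrating the HJB equation to get $(\ast)$, and testing it against $m^\epsilon$ via the Fokker--Planck equation---but the two arguments diverge in how they handle the sign-indefinite term. The paper tests against $m^\epsilon-\delta$ for a fixed small $\delta>0$ (rather than your $m^\epsilon-1$, which is what your subtraction $(\ast\ast)-(\ast)$ amounts to), and then disposes of $\int \frac{m^\epsilon-\delta}{(m^\epsilon+\epsilon)^\alpha}\,dx$ in one stroke via the convexity of $z\mapsto z^{1-\alpha}/(\alpha-1)$:
\[
\frac{m-\delta}{(m+\epsilon)^\alpha}\;\le\;\frac{(\delta+\epsilon)^{1-\alpha}}{\alpha-1}\;-\;\frac{(m+\epsilon)^{1-\alpha}}{\alpha-1}.
\]
Because the coefficient of $\overline{H}^\epsilon$ in the resulting inequality is $(1-\delta)>0$, this immediately gives $\overline{H}^\epsilon\le C$, and then $\int(m^\epsilon+\epsilon)^{-\alpha}\,dx\le C$ follows from $(\ast)$. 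You instead bound $\int(m^\epsilon+\epsilon)^{-\alpha}\,dx$ directly by the level-set splitting $\{m^\epsilon\ge 1\}$, $\{1/2<m^\epsilon<1\}$, $\{m^\epsilon\le 1/2\}$, and only then return to $(\ast)$ to control $\overline{H}^\epsilon$. Your route is more elementary and has the minor advantage of working uniformly in $\alpha>0$ without any implicit case distinction at $\alpha=1$; the paper's convexity step is more concise, delivers the auxiliary bound on $\int(m^\epsilon+\epsilon)^{1-\alpha}\,dx$ as a byproduct, and is the same device used later in the time-dependent setting (Proposition~\ref{pehm}).
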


\begin{proof}

For ease of presentation, we drop the superscript $\epsilon$ in what follows. We integrate the Hamilton-Jacobi equation in \eqref{mfgregsta} to
 obtain
\begin{equation}\label{foe0}
        -C\,\leq \,\int_{\mathbb{T}^d}H(x,Du)dx\,=\,\overline{H}\,-\,\int_{\mathbb{T}^d}\frac{dx}{(m\,+\,\epsilon)^\alpha}\,\leq\,\overline{H}.
\end{equation}
The former inequality gives a lower bound for the ergodic constant, $\overline H$. Next, we fix a number $\delta\ll1$ and multiply the first equation in \eqref{mfgregsta} by $m\,-\,\delta$. Then, we multiply the second equation in \eqref{mfgregsta} by $-u$ and sum them. Integrating by parts, we get
\[
        \int_{\mathbb{T}^d}(H-D_pH\cdot Du)m -\delta\int_{\mathbb{T}^d}Hdx\,=\,(1-\delta)\overline{H}\,-\,\int_{\mathbb{T}^d}\frac{m-\delta}{(m+\epsilon)^\alpha}dx.
\]
By rearranging the terms in the previous equality, we have
\begin{equation}\label{foe1}
        (1-\delta)\overline{H}\,+\, \frac{1}{C_2}\int_{\mathbb{T}^d}|Du|^\gamma m dx\,+\,\delta\int_{\mathbb{T}^d}H dx\,\leq\, \int_{\mathbb{T}^d}\frac{m-\delta}{(m+\epsilon)^\alpha}dx + C_1.
\end{equation}
As before, we use the convexity of
\[
        z\,\mapsto\,\frac{z^{1-\alpha}}{\alpha-1}
\]
to conclude that
\[
        \int_{\mathbb{T}^d}\frac{m-\delta}{(m+\epsilon)^\alpha}dx\,\leq\, \frac{(\delta\,+\,\epsilon)^{1-\alpha}}{\alpha\,-\,1}\,- \frac{1}{\alpha\,-\,1} \,\int_{\mathbb{T}^d}\frac{dx}{(m\,+\,\epsilon)^{\alpha-1}}.
\]
The former inequality, together with \eqref{foe1}, implies there exists a constant, $C>0$, such that
\[
        \overline{H}\,+\,\int_{\mathbb{T}^d}|Du|^\gamma m \, dx\,+\,\int_{\mathbb{T}^d}H dx\,+\,\int_{\mathbb{T}^d}\frac{dx}{(m+\epsilon)^{\alpha-1}}dx\,\leq\, C.
\]
Combined with \eqref{foe0}, the prior inequality concludes the proof.
\end{proof}

\begin{Lemma}[Second-order estimates]\label{lem_soesta}Let $(u^\epsilon,\overline{H}^\epsilon, m^\epsilon)$ be a solution to \eqref{mfgregsta}. Suppose Assumptions A\ref{sub1}-A\ref{sub2} and A\ref{trace} hold. Then,
\begin{equation}\label{stat_sec_ord}
        \int_{\mathbb{T}^d}\frac{|Dm|^2}{(m+\epsilon)^{\alpha+1}}dx\,+\,\int_{\mathbb{T}^d}\tr\left(D^2_{pp}H(D^2u)^2\right)mdx\,\leq\,C
\end{equation}
for some positive constant $C$.
\end{Lemma}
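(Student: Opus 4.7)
Dropping the superscript $\ep$ for clarity, I would deduce a single Bochner-type identity by differentiating the Hamilton--Jacobi equation in \eqref{mfgregsta} once in $x_k$, multiplying by $m_{x_k}$, summing over $k$, and integrating over $\Tt^d$. The term $-\int \Delta u_{x_k}\,m_{x_k}$ integrates by parts into $\int \De u\,\De m$, and the term arising from $D_pH\cdot Du_{x_k}$ integrates by parts into $-\int m\,\tr(D^2_{pp}H(D^2u)^2)-\int m\,\tr(D^2_{xp}H\,D^2u)-\int m\,D_pH\cdot D\De u$. The adjoint identity $\int_{\Tt^d} m(-\De\phi + D_pH\cdot D\phi)\,dx=0$ applied to $\phi=\De u$ (an immediate consequence of the Fokker--Planck equation in \eqref{mfgregsta}) then cancels $\int m\,D_pH\cdot D\De u$ against $\int \De u\,\De m$. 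After these manipulations I expect to reach the identity
\[
\alpha\int_{\Tt^d}\frac{|Dm|^2}{(m+\ep)^{\alpha+1}}dx+\int_{\Tt^d} m\,\tr\bigl(D^2_{pp}H(D^2u)^2\bigr)dx=\int_{\Tt^d}D_xH\cdot Dm\,dx-\int_{\Tt^d} m\,\tr\bigl(D^2_{xp}H\,D^2u\bigr)dx.
\]
The positivity $g'(m)=\alpha(m+\ep)^{-\alpha-1}>0$, which reflects the monotonicity of the singular coupling, is precisely what produces the weighted Fisher-information term on the left-hand side.

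Next I would invoke Assumption A\ref{trace} with $M=D^2u$ to dominate the cross term on the right by
\[
\Bigl|\int_{\Tt^d} m\,\tr\bigl(D^2_{xp}H\,D^2u\bigr)dx\Bigr|\le \delta\int_{\Tt^d} m\,\tr\bigl(D^2_{pp}H(D^2u)^2\bigr)dx + C_\delta\int_{\Tt^d} m\,H(x,Du)\,dx.
\]
For the remaining term $\int D_xH\cdot Dm$, one further integration by parts gives
\[
\int_{\Tt^d} D_xH\cdot Dm\,dx=-\int_{\Tt^d} m\bigl[\De_{xx}H+\tr(D^2_{xp}H\,D^2u)\bigr]dx,
\]
so A\ref{trace} applies again to the second summand, while $|\De_{xx}H|\le C(1+H)$ follows from the smoothness and polynomial growth of $H$ encoded in A\ref{sub1}. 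Choosing $\delta$ small enough that a fraction of $\int m\,\tr(D^2_{pp}H(D^2u)^2)$ is absorbed into the left-hand side leaves an upper bound of the form $C\int_{\Tt^d} m(1+H(x,Du))dx$, which by Lemma \ref{lem_foesta} and $\int m=1$ is finite and independent of $\ep$. This yields \eqref{stat_sec_ord}.

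The main obstacle is the cross term $\int m\,\tr(D^2_{xp}H\,D^2u)$: it has exactly the same homogeneity as the good convexity quantity $\int m\,\tr(D^2_{pp}H(D^2u)^2)$, so no general argument based solely on first-order bounds can control it. Assumption A\ref{trace} is tailored precisely to allow its absorption at the price of a lower-order term already handled by Lemma \ref{lem_foesta}. Remarkably, the singular nature of the coupling plays no adverse role in this second-order estimate; its monotonicity is used only to produce the good sign of the Fisher-type term on the left.
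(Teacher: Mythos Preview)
Your argument is correct and is essentially the paper's proof in a mildly different guise. The paper applies $\Delta$ to the Hamilton--Jacobi equation, multiplies by $m$, integrates, and uses the Fokker--Planck equation to cancel $\int m\,D_pH\cdot D\Delta u$ against $\int m\,\Delta\Delta u$; you instead differentiate once, multiply by $m_{x_k}$, sum, and use the adjoint identity with $\phi=\Delta u$ to effect the same cancellation. After your further integration by parts on $\int D_xH\cdot Dm$, you reach exactly the paper's identity
\[
\alpha\int_{\Tt^d}\frac{|Dm|^2}{(m+\ep)^{\alpha+1}}dx+\int_{\Tt^d} m\,\tr\bigl(D^2_{pp}H(D^2u)^2\bigr)dx=-\int_{\Tt^d} m\bigl[\Delta_{xx}H+2\tr(D^2_{xp}H\,D^2u)\bigr]dx,
\]
and then both proofs finish identically via A\ref{trace} and Lemma \ref{lem_foesta}. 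One small remark: the bound $|\Delta_{xx}H|\le C(1+H)$ is not literally contained in A\ref{sub1} (which controls only $H$, $D_pH$, $D_xH$); the paper uses it tacitly as well, so you are in good company, but strictly speaking it is an extra standing hypothesis on the Hamiltonian.
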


\begin{proof}
As before, we omit the superscript $\epsilon$ throughout the proof. First, we apply the Laplace operator to the first equation in \eqref{mfgregsta} to get
\begin{align}\label{soesta1}
        \Delta\Delta u+\tr\left(D^2_{pp}H(D^2u)^2\right)+\Delta H+2\tr\left(D^2_{px}H D^2u\right)+D_pHD(\Delta u)=\div\left(\frac{\alpha Dm}{(m+\epsilon)^{\alpha+1}}\right).
\end{align}
By multiplying \eqref{soesta1} by $m$, integrating by parts, and using the second equation in \eqref{mfgregsta}, we obtain
\begin{align*}
        \alpha\int_{\mathbb{T}^d}\frac{|Dm|^2}{(m+\epsilon)^{\alpha+1}}dx+&\int_{\mathbb{T}^d}\tr\left(D^2_{pp}H(D^2u)^2\right)mdx=-\int_{\mathbb{T}^d}(\Delta H+2\tr(D^2_{px}HD^2u))mdx\\
        &\quad\leq \int_{\mathbb{T}^d}\left(|D^2_{xx}H|+\delta\tr(D^2_{pp}H(D^2u)^2)+C_\delta H\right)mdx.
\end{align*}
By choosing $\delta\in(0,1)$ and using the estimates in Lemma \ref{lem_foesta}, we conclude
\[
        \int_{\mathbb{T}^d}\frac{|Dm|^2}{(m+\epsilon)^{\alpha+1}}dx\,+\,\int_{\mathbb{T}^d}\tr\left(D^2_{pp}H(D^2u)^2\right)mdx\,\leq\,C
\]
for some constant $C>0$, which finishes the proof.
\end{proof}

\subsection{The blow-up and Proof of Theorem \ref{thm_stationary}}\label{sec_proofsta}

Under suitable assumptions, the following proposition states that integrability of $(m^\epsilon + \epsilon)^{-1}$ is sufficient to obtain bounds from below for $m^\epsilon + \epsilon$.

\begin{Proposition}\label{lem_infty}Suppose Assumptions A\ref{sub1}-A\ref{sub2} hold. Fix $K>0$ and assume
\begin{equation}\label{p_ass}
p >  \frac{\alpha d}{\gamma'}.
\end{equation}
Then, there exists $C>0$, independent on $\epsilon$, such that for any solution $(u^\epsilon,\overline{H}^\epsilon, m^\epsilon)$ to Problem \ref{P2sta} satisfying
\[
\int_{\Tt^d} \frac{1}{(m^\epsilon + \epsilon)^p} \, dx \le K,
\]
we have
\begin{equation}\label{inftybound}
m^\epsilon + \epsilon \ge C
\end{equation}
for all $\epsilon$.
\end{Proposition}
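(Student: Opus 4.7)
The plan is a blow-up argument in the spirit of \cite{marcoCPDE}. Suppose, for contradiction, that the conclusion fails: then there exist $\epsilon_n>0$ and solutions $(u_n,\overline H_n,m_n)$ of Problem \ref{P2sta} satisfying the integrability hypothesis but with
\[
\mu_n\,:=\,\min_{\Tt^d}(m_n+\epsilon_n)\,=\,m_n(x_n)+\epsilon_n\,\longrightarrow\,0
\]
for some $x_n\in\Tt^d$. Introduce the scales $\lambda_n:=\mu_n^{\alpha/\gamma'}$ (space) and $\nu_n$ such that $\nu_n\lambda_n=\mu_n^{\alpha/\gamma}$ (so that $\nu_n\lambda_n^2=\mu_n^\alpha$), and define
\[
\tilde m_n(y)\,:=\,\frac{m_n(x_n+\lambda_n y)+\epsilon_n}{\mu_n}\,,\qquad \tilde u_n(y)\,:=\,\nu_n\bigl(u_n(x_n+\lambda_n y)-u_n(x_n)\bigr).
\]
These exponents are forced by requiring that diffusion, the Hamiltonian's first-order term, and the singular coupling $-(m+\epsilon)^{-\alpha}$ all balance at order one in the new variables. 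Crucially, $\tilde m_n(0)=1$ and, because $x_n$ is a global minimum of $m_n+\epsilon_n$, $\tilde m_n\ge 1$ everywhere.

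A direct calculation shows that $(\tilde u_n,\tilde m_n)$ solves, on any fixed ball $B_R(0)\subset\Rr^d$ once $n$ is large, a rescaled MFG with singular coupling $-\tilde m_n^{-\alpha}\in[-1,0]$, rescaled Hamiltonian $\tilde H_n(y,p):=\mu_n^{\alpha}H(x_n+\lambda_n y,(\nu_n\lambda_n)^{-1}p)$ that inherits the growth of Assumption A\ref{sub1} with constants independent of $n$, and rescaled drift $\tilde b_n:=\lambda_n D_p H(\cdot,(\nu_n\lambda_n)^{-1}\nabla\tilde u_n)$ satisfying $|\tilde b_n|\le C_1\lambda_n+C_2|\nabla\tilde u_n|^{\gamma-1}$. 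The integrability assumption, pushed through the change of variables $x=x_n+\lambda_n y$, rescales to
\[
\int_{B_R}\tilde m_n^{-p}\,dy\,=\,\mu_n^{\,p}\lambda_n^{-d}\int_{B_{R\lambda_n}(x_n)}(m_n+\epsilon_n)^{-p}\,dx\,\le\,K\,\mu_n^{\,p-\alpha d/\gamma'}\,\longrightarrow\,0,
\]
which is exactly where the threshold $p>\alpha d/\gamma'$ enters. I will contradict this decay using only $\tilde m_n(0)=1$ plus a uniform modulus of continuity for $\tilde m_n$ near the origin.

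The remaining step, which is also the main obstacle, is to extract that uniform regularity of $\{\tilde m_n\}$. The strategy is to feed the rescalings of Lemmas \ref{lem_foesta}--\ref{lem_soesta} into the divergence-form Fokker--Planck equation
\[
-\Delta\tilde m_n\,-\,\div\bigl((\tilde m_n-\epsilon_n/\mu_n)\tilde b_n\bigr)\,=\,0.
\]
Specifically, the weighted first-order bound $\int|Du_n|^\gamma m_n\,dx\le C$ rescales (because of the choice $\nu_n\lambda_n=\mu_n^{\alpha/\gamma}$) into a local $L^\gamma$ control of $|\nabla\tilde u_n|\tilde m_n^{1/\gamma}$, while $\int (m_n+\epsilon_n)^{-\alpha-1}|Dm_n|^2\,dx\le C$ rescales into a local $H^1$ bound for $\tilde m_n^{(1-\alpha)/2}$. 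Combined with the pointwise bound $\tilde m_n\ge 1$, a Caccioppoli computation followed by De Giorgi--Nash--Moser iteration yields a local Hölder modulus for $\tilde m_n$ independent of $n$. By Arzelà--Ascoli, a subsequence converges locally uniformly to some $\tilde m_\infty\ge 1$ with $\tilde m_\infty(0)=1$. Continuity then produces a ball $B_\rho(0)$ on which $\tilde m_n\le 2$ for $n$ large, hence
\[
\int_{B_\rho}\tilde m_n^{-p}\,dy\,\ge\,2^{-p}|B_\rho|\,>\,0,
\]
in contradiction with the previous decay. This forces $\mu_n$ to stay bounded away from zero and gives \eqref{inftybound}.
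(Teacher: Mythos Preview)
Your blow-up framework, choice of scales, and the endgame (rescale the $L^p$ bound on $(m+\epsilon)^{-1}$ and contradict it with a positive lower bound for $\int_{B_\rho}\tilde m_n^{-p}$) all match the paper. The gap is in the middle: your route to uniform local regularity of $\tilde m_n$ via rescaling Lemmas~\ref{lem_foesta}--\ref{lem_soesta} does not work as stated. If you actually track the change of variables, the first-order bound becomes
\[
\int_{B_R}|\nabla\tilde u_n|^\gamma\,\tilde m_n\,dy
\;\le\; C\,\mu_n^{\alpha-1}\,\lambda_n^{-d}
\;=\;C\,\mu_n^{\,\alpha-1-\alpha d/\gamma'},
\]
and the second-order bound becomes
\[
\int_{B_R}\bigl|D\tilde m_n^{(1-\alpha)/2}\bigr|^2\,dy
\;\le\; C\,\mu_n^{\,\alpha-1+(2-d)\alpha/\gamma'} .
\]
Neither exponent is nonnegative under the sole hypothesis $p>\alpha d/\gamma'$, so these quantities may blow up rather than stay bounded. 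Even if they were bounded, $\tilde m_n\ge1$ would only yield $\nabla\tilde u_n\in L^\gamma_{\rm loc}$, hence $\tilde b_n\in L^{\gamma'}_{\rm loc}$; this is not enough drift integrability for a De Giorgi--Nash--Moser/Harnack argument on the Fokker--Planck equation unless $\gamma'>d$, and in any case you still need an a priori upper bound on $\tilde m_n$ (precisely what you are trying to prove) to control the divergence term $(\tilde m_n-\epsilon_n/\mu_n)\tilde b_n$.

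The paper avoids all of this by using a fact you already recorded but did not exploit: the right-hand side of the \emph{rescaled Hamilton--Jacobi equation} is bounded in $L^\infty$, since $-\tilde m_n^{-\alpha}\in[-1,0]$ and $\mu_n^{\alpha}\,\overline H_n\to0$ by Lemma~\ref{lem_foesta}. Integral Bernstein estimates for viscous HJB equations then give $\|\nabla\tilde u_n\|_{L^r(B_2)}\le C_r$ for every $r$, hence the drift $\tilde b_n\in W^{1,q}(B_2)$ for every $q$. With this, Harnack's inequality for the rescaled Fokker--Planck equation (using $\tilde m_n(0)=1$, $0\le\epsilon_n/\mu_n\le1$) yields $\max_{B_1}\tilde m_n\le C$ uniformly, and your contradiction goes through. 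In short: replace the rescaled integral estimates by the pointwise bound on the rescaled HJ right-hand side plus Bernstein, and your proof is complete.
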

\begin{proof} We argue by contradiction. Suppose the claim in the proposition is false. Then, there are $\eta^\epsilon > 0, x^\epsilon \in \Tt^d$, such that
\[
0 < \eta^\epsilon := m^\epsilon(x^\epsilon) + \epsilon = \min_{\Tt^d} (m^\epsilon + \epsilon) \to 0, \quad \text{as $\epsilon \to 0$}.
\]
For ease of presentation, we proceed with several steps.

{\bf Step 1.} We define
$a^\epsilon := (\eta^\epsilon)^{\alpha/\gamma'}$ 
and set $\mathcal{T}^\epsilon = \{x: x^\epsilon + a^\epsilon x \in \Tt^d\}$.
Next, we consider
the following blow-up sequences: 
\begin{equation}\label{blowupdef}
v^\epsilon(x) := (a^\epsilon)^{\gamma'-2} u^\epsilon(x^\epsilon + a^\epsilon x), \quad \mu^\epsilon(x) := \frac{m^\epsilon(x^\epsilon + a^\epsilon x) + \epsilon}{\eta^\epsilon} \quad 
\end{equation}
for $x \in \mathcal{T}^\epsilon $.
To simplify the notation and because no ambiguity occurs,
in the remaining part of this proof, we omit 
the superscript $\epsilon$ 
in $a^\epsilon$, $\mu^\epsilon$, and $v^\epsilon$. 

Then, $(v, \mu)$ solves
\begin{equation}\label{MFGn}
\begin{cases}
- \Delta v + H^\epsilon(x, D v) = a^{\gamma'} \bar{H} - a^{\gamma'}(\eta \mu)^{-\alpha} \\
- \Delta \mu -{\rm div}(D_p H^\epsilon(x, D v) \, \mu) + \frac{\epsilon}{\eta}{\rm div}(D_p H^\epsilon(x, D v)) =  0& \text{in $\mathcal{T}^\epsilon$}, \\
\end{cases}
\end{equation}
where $H^\epsilon(x,p) = a^{\gamma'}H(x^\epsilon + a^\epsilon x, a^{1-\gamma'}p)$. Because $a \to 0$, $H^\epsilon$ satisfies Assumptions A1-A2, where $C_1$, $C_2$, $\gamma$ are the same as for $H$. In particular,
\begin{equation}\label{Heps_bound}
|D_p H^\epsilon(x,p)| \le a^{\gamma'}C_1 + {C_2}|p|^{\gamma-1} .
\end{equation}
Moreover, $\mu(0) = 1$ and $\mu \ge 1$ in $\mathcal{T}^\epsilon$, so 
\begin{equation}\label{Fbound}
0 \le a^{\gamma'}(\eta \mu)^{-\alpha} \le 1 \quad \text{on $Q^\epsilon$},
\end{equation}
due to the choice of $a$.

{\bf Step 2.} The gradient of $v$ is bounded in $L^r(B_2(0))$ for any $r>1$. It suffices to observe that
\[
| - \Delta v + H^\epsilon(x, D v) | \le 1 + a^{\gamma'} |\bar{H}| \le 2 \;\;\;\;\;\mbox{in} \;\;\;\;\;\mathcal{T}^\epsilon
\]
by \eqref{Fbound} and \eqref{stat_first_ord} when $\epsilon$ is small enough. Therefore, we apply the integral Bernstein estimates for viscous Hamilton-Jacobi-Bellman (HJB) equations to get
\begin{equation}\label{Lrbound}
\| D v \|_{L^r(B_2(0))} \le C_r.
\end{equation}

For the integral Bernstein method in the special case $H(x,p) = |p|^\gamma$, see \cite{lasry1989nonlinear}. For the general case, we argue as in \cite{cirantjmpa} or \cite{PV15}.

Moreover, from elliptic regularity, we gather that 
\begin{equation}
\label{2derv}
\|D^2v\|_{L^q(B_2(0))}\leq C_q, 
\end{equation}
for any $q>1$. 

{\bf Step 3.} Now, we show that $\mu$ is uniformly bounded from above in the ball $B_1(0)\cap\mathcal{T}^\epsilon$. Indeed,
according to \eqref{Lrbound} and \eqref{2derv}, 
for any $q>1$, 
$B(x) = -D_p H^\epsilon(x, D v(x))$ satisfies
\[
\| B \|_{W^{1,q}(B_2(0))} \le C_q. 
\]
Moreover, $\mu$ is a positive solution to the following equation in divergence form
\[
-{\rm div}\left(D \mu + B \, \mu - \frac{\epsilon}{\eta} B \right) =  0 \;\;\;\;\;\mbox{in}\;\;\;\;\;\mathcal{T}^\epsilon.
\]

By the definition of $\eta$, we have $\eta \ge \epsilon$.  So,  $0 \le \epsilon / \eta \le 1$. Thus, we apply Harnack's inequality (see, for example, \cite{Serrin}) to get
\[
\max_{B_1(0)} \mu \le C ( \min_{B_1(0)} \mu + k)  = C (\mu (0)  + k )= C(1 + k)
\]
for some positive constants $C,\,k$ depending only on the upper bound of $\| B \|_{L^q(B_2(0))}$. 

Hence, for all $p > 0$, we have
\begin{equation}\label{belowLalpha}
\int_{B_1(0)} \left(\frac{1}{\mu}\right)^p \, dx \ge \delta_p > 0.
\end{equation}

{\bf Step 4.} Finally, evaluating the integral of $(1/\mu)^p$ on $\mathcal{T}^\epsilon$ gives
\begin{equation}\label{capitalA}
\int_{\mathcal{T}^\epsilon} \left(\frac{1}{\mu}\right)^p \, dx= \frac{\eta^p}{a^{d}} \int_{\Tt^d} \left(\frac{1}{m + \epsilon}\right)^p \, dx=  \eta^{p - d \frac{\alpha}{\gamma'}}\int_{\Tt^d} \frac{1}{(m + \epsilon)^p} \, dx \to 0
\end{equation}
as $\epsilon \to 0$, in view of the assumptions of the proposition. However, \eqref{capitalA} contradicts \eqref{belowLalpha}.

\end{proof}

First, we examine the case $\gamma < d/(d-1)$, where no additional assumptions on $\alpha$ are needed.

\begin{cor}\label{ddm1} Suppose Assumptions A\ref{sub1}-A\ref{sub2} hold and that $\gamma < d/(d-1)$. Then, for any solution $(u^\epsilon,\overline{H}^\epsilon, m^\epsilon)$ to Problem \ref{P2sta}, there exists $C > 0$, not depending on $\epsilon$, such that
\begin{equation}\label{inftybound}
m^\epsilon + \epsilon \ge C
\end{equation}
for all $\epsilon$.
\end{cor}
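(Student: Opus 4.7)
The plan is to apply Proposition \ref{lem_infty} directly, using only the first-order integral estimate from Lemma \ref{lem_foesta}. The key observation is that the condition $\gamma < d/(d-1)$ is precisely what makes the exponent $\alpha$ appearing in that first-order estimate fall inside the admissible range in Proposition \ref{lem_infty}.

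First, I would recall that Lemma \ref{lem_foesta} supplies the uniform bound
\[
        \int_{\Tt^d}\frac{1}{(m^\epsilon+\epsilon)^{\alpha}}\,dx\,\le\,C
\]
for some $C$ independent of $\epsilon$. Next, I would observe that $\gamma<d/(d-1)$ implies that the conjugate exponent satisfies $\gamma'=\gamma/(\gamma-1)>d$. Therefore
\[
        \frac{\alpha d}{\gamma'}\,<\,\alpha,
\]
so the choice $p=\alpha$ satisfies the hypothesis \eqref{p_ass} of Proposition \ref{lem_infty} strictly.

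With $p=\alpha$ and $K$ equal to the constant $C$ produced by Lemma \ref{lem_foesta}, all assumptions of Proposition \ref{lem_infty} are in force; applying it yields a constant $C>0$, independent of $\epsilon$, such that $m^\epsilon+\epsilon\ge C$, which is the conclusion of the corollary.

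There is essentially no obstacle: the whole point of the regime $\gamma<d/(d-1)$ is exactly that the integrability of $(m^\epsilon+\epsilon)^{-\alpha}$ obtained from the first-order test (multiplying the Hamilton-Jacobi equation by $m-\delta$ and the Fokker-Planck equation by $-u$) already lies above the critical threshold $\alpha d/\gamma'$ dictated by the blow-up argument. In particular, no use of the second-order estimate \eqref{stat_sec_ord} nor of Assumption A\ref{trace} is required here, which is why no lower bound on $\alpha$ (as in Assumption A\ref{alphastat}) has to be imposed in this subcritical range of $\gamma$.
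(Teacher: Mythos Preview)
Your proof is correct and follows exactly the same approach as the paper: apply Proposition \ref{lem_infty} with $p=\alpha$, using the first-order estimate \eqref{stat_first_ord} from Lemma \ref{lem_foesta}, and observe that $\gamma<d/(d-1)$ is equivalent to $\gamma'>d$, which makes \eqref{p_ass} hold. Your additional remark that neither the second-order estimate nor Assumption A\ref{trace} is needed in this regime is correct and worth noting.
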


\begin{proof} It suffices to apply Proposition \ref{lem_infty} with $p = \alpha$ and the estimate \eqref{stat_first_ord}. Note that \eqref{p_ass} reads $\gamma' > d$.
\end{proof}

Next, we consider case when $\gamma$ is above $d/(d-1)$, which requires $\alpha$ not be too small.

\begin{cor}\label{dm2dm3} Suppose Assumptions A\ref{sub1}-A\ref{sub2} hold, $d \ge 3$, $\gamma < (d-2)/(d-3)$, and
\begin{equation}\label{baralpha}
\alpha > \max\left(\frac{\gamma}{\gamma(3-d) + d-2},1\right).
\end{equation}
Then, there exists a constant, $C > 0$, independent of $\epsilon,$ such that for any solution $(u^\epsilon,\overline{H}^\epsilon, m^\epsilon)$ to Problem \eqref{P2sta}, we have
\begin{equation}\label{inftybound}
m^\epsilon + \epsilon \ge C
\end{equation}
for all $\epsilon$.
\end{cor}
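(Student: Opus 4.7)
\textbf{Proof plan for Corollary \ref{dm2dm3}.} The plan is to apply Proposition \ref{lem_infty} with a suitably chosen exponent $p$, where the integrability of $(m^\epsilon+\epsilon)^{-p}$ is extracted from the second-order estimate of Lemma \ref{lem_soesta} combined with the Sobolev embedding. Since $\alpha>1$ by \eqref{baralpha}, I would introduce the auxiliary function
\[
w^\epsilon := (m^\epsilon + \epsilon)^{(1-\alpha)/2},
\]
so that a direct computation yields
\[
|Dw^\epsilon|^2 \,=\, \frac{(\alpha-1)^2}{4}\cdot\frac{|Dm^\epsilon|^2}{(m^\epsilon+\epsilon)^{\alpha+1}}.
\]

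Dropping the superscript $\epsilon$ for readability, Lemma \ref{lem_soesta} gives $\|Dw\|_{L^2(\Tt^d)}\le C$, while the estimate \eqref{stat_first_ord} from Lemma \ref{lem_foesta} (more precisely, the bound on $\int(m+\epsilon)^{1-\alpha}dx$ obtained in its proof) yields $\|w\|_{L^2(\Tt^d)}^2 = \int_{\Tt^d}(m+\epsilon)^{1-\alpha}dx \le C$. Since $d\ge 3$, the Sobolev embedding $H^1(\Tt^d)\hookrightarrow L^{2d/(d-2)}(\Tt^d)$ then produces a uniform bound
\[
\int_{\Tt^d} \frac{1}{(m^\epsilon+\epsilon)^{p}} \, dx \,\le\, K, \qquad \text{where } p := \frac{(\alpha-1)d}{d-2}.
\]

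It remains to verify the hypothesis \eqref{p_ass} of Proposition \ref{lem_infty}, namely $p > \alpha d/\gamma'$. This is equivalent to $\alpha(\gamma'-(d-2)) > \gamma'$, i.e.\ (using $\gamma' = \gamma/(\gamma-1)$)
\[
\alpha \,>\, \frac{\gamma}{\gamma(3-d)+(d-2)},
\]
which is precisely the first quantity in \eqref{baralpha}; the hypothesis $\gamma < (d-2)/(d-3)$ guarantees that the denominator is positive. With $p$ thus admissible, Proposition \ref{lem_infty} delivers the desired uniform lower bound \eqref{inftybound}.

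The only subtle point is ensuring that the Sobolev step is legitimate with constants independent of $\epsilon$, which it is because both $\|Dw\|_{L^2}$ and $\|w\|_{L^2}$ are controlled uniformly in $\epsilon$ by Lemmas \ref{lem_foesta}--\ref{lem_soesta}. The other potential issue, that $1-\alpha$ be negative so that $w$ encodes a negative power of $m+\epsilon$, is guaranteed by the requirement $\alpha>1$ built into \eqref{baralpha}. No additional work is needed, and the corollary follows directly from Proposition \ref{lem_infty}.
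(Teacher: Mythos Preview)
Your proposal is correct and follows essentially the same route as the paper: introduce the auxiliary power $w=(m+\epsilon)^{(1-\alpha)/2}$, control $\|Dw\|_{L^2}$ via the second-order estimate \eqref{stat_sec_ord} and $\|w\|_{L^2}$ via the first-order estimate, apply the Sobolev embedding $H^1\hookrightarrow L^{2d/(d-2)}$ to obtain $\int (m+\epsilon)^{-p}\le K$ with $p=(\alpha-1)d/(d-2)$, and then invoke Proposition~\ref{lem_infty}. Your explicit verification that \eqref{p_ass} is equivalent to the first branch of \eqref{baralpha} is a detail the paper only asserts, so your write-up is in fact slightly more complete; note, as in the paper, that the use of Lemma~\ref{lem_soesta} tacitly requires Assumption~A\ref{trace} even though the corollary's hypotheses list only A\ref{sub1}--A\ref{sub2}.
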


\begin{proof} If $d>2$, $W^{1,2}(\Tt^d)$ is continuously embedded into $L^{\frac{2d}{d-2}}(\Tt^d)$; then, we have
\[
\left( \int_{\Tt^d} (m + \epsilon)^{(1-\alpha)\frac{d}{d-2}} \, dx \right)^{\frac{d-2}{d}} \le C \left( \int_{\Tt^d}\left |D m^{\frac{1-\alpha}{2}} \right|^2 \, dx + \int_{\Tt^d} (m+\epsilon)^{{1-\alpha}} \, dx \right).
\]
The right-hand side of the previous inequality is bounded in view of \eqref{stat_first_ord} and  \eqref{stat_sec_ord} because
\[
        \left |D m^{\frac{1-\alpha}{2}} \right|^2 = \left (\frac{1-\alpha}{2}\right)^2 m^{-(\alpha+1)}|D m|^2.
\]
Finally, because \eqref{baralpha} guarantees \eqref{p_ass},  we apply Proposition \ref{lem_infty} with $p = (\alpha - 1)\frac{d}{d-2}$. 
\end{proof}

In the case $d=2$, arguing as in Corollary \ref{dm2dm3} 
gives that
\[
\int_{\Tt^d} (m + \epsilon)^{-p} \, dx \le C
\]
for all $p > 0$, provided
 $\alpha > 1$ and the conditions in Proposition \ref{lem_infty} hold. Finally, we prove our main result in the stationary case, Theorem \ref{thm_stationary}.

\begin{proof}[Proof of Theorem \ref{thm_stationary}] Depending on the values of $\gamma$ and $\alpha$, Corollaries \ref{ddm1}-\ref{dm2dm3} provide boundedness from below of $m^\epsilon + \epsilon$ independent of $\epsilon$, which implies the boundedness of $-(m^\epsilon + \epsilon)^{-\alpha}$ in $L^\infty(\Tt^d)$. Consequently, integral Bernstein estimates for viscous HJB equations (see \cite{lasry1989nonlinear, PV15}) guarantee that $D u^\epsilon$ is bounded in $L^r(\Tt^d)$ for all $r$. This is enough to have the optimal drift $-D_pH(x,Du_\epsilon)$ in $L^r(\Tt^d)$ for all $r$. Hence, $m^\epsilon$ is in $C^{0,\alpha}$ by classical regularity for the Kolmogorov equation. By bootstrapping in the two equations,  we have enough regularity for $u^\epsilon$ and $m^\epsilon$ to pass to the limit in \eqref{mfgregsta} and obtain smooth solutions.

\end{proof}

\section{Time-dependent problem}\label{sec_td}

\subsection{Estimates for the regularized MFG}\label{hle}
Now, we consider estimates for solutions to Problem \ref{P2}.
We begin with an estimate that is a consequence of the  optimal control representation discussed in the Introduction. Next, we use a modification of a standard technique in MFGs to prove bounds on the coupling $\frac{1}{(m^\epsilon+\epsilon)^\alpha}$
that are uniform in $\epsilon$. These estimates are essential to prove the convergence of solutions as $\epsilon \to 0$. 
\begin{Proposition}
\label{plb}
Assume A\ref{sub1} holds. Let $(u^\ep,m^\ep)$ solve Problem \ref{P2}.   
Then, for any solution, $\zeta:\Tt^d\times (t,T]\to \Rr$, of the heat equation
\begin{equation}
\label{diflaw2}
\zeta_s\,-\,\De \zeta\,=\,0, 
\end{equation}
with $\zeta(x,t)=\zeta_0$, 
we have the following upper bound: 
\begin{align}
\label{lhe2}
\int_{\Tt^d} u^\ep(x,t)\zeta_0(x)\, dx \le
&-\int_t^T\int_{\Tt^d}\frac {\zeta(x,s)\,dx\, ds}{(m(x,s)+\ep)^\af}
+\int_{\Tt^d} u^T(x) \zeta(x,T)\,dx. 
\end{align} 
\end{Proposition}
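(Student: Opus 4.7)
The plan is to exploit the PDE duality between the Hamilton--Jacobi equation for $u^\ep$ and the heat equation for $\zeta$, using only the nonnegativity of $H$ (Assumption A\ref{sub1}) to obtain the one-sided bound. The probabilistic counterpart would be to test $u^\ep$ against the suboptimal control $\mathbf v\equiv 0$, whose closed-loop dynamics is pure Brownian motion and whose density evolution is precisely $\zeta$; the PDE version below is the direct dual of that argument and avoids any stochastic regularity discussion.

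First I would compute $\displaystyle \frac{d}{ds}\int_{\Tt^d} u^\ep(x,s)\,\zeta(x,s)\,dx$ for $s\in[t,T]$. Writing the Hamilton--Jacobi equation in \eqref{eq:mfgreg} as
\[
u^\ep_s=-\De u^\ep+H(x,Du^\ep)+(m^\ep+\ep)^{-\af},
\]
and using $\zeta_s=\De\zeta$, one has
\[
\frac{d}{ds}\int_{\Tt^d}u^\ep\,\zeta\,dx=\int_{\Tt^d}\bigl[-\De u^\ep+H(x,Du^\ep)+(m^\ep+\ep)^{-\af}\bigr]\zeta\,dx+\int_{\Tt^d}u^\ep\,\De\zeta\,dx.
\]
Integration by parts on $\Tt^d$ (no boundary) cancels the two Laplacian terms, leaving
\[
\frac{d}{ds}\int_{\Tt^d}u^\ep\,\zeta\,dx=\int_{\Tt^d}H(x,Du^\ep)\,\zeta\,dx+\int_{\Tt^d}\frac{\zeta}{(m^\ep+\ep)^{\af}}\,dx.
\]

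Next, Assumption A\ref{sub1} allows us to take $H\ge 0$, and since $\zeta$ solves the heat equation with (implicitly nonnegative) initial datum $\zeta_0\ge 0$, the maximum principle yields $\zeta(\cdot,s)\ge 0$ for all $s\in[t,T]$. Consequently the $H$-term above is nonnegative, and
\[
\frac{d}{ds}\int_{\Tt^d}u^\ep\,\zeta\,dx\ \ge\ \int_{\Tt^d}\frac{\zeta(x,s)}{(m^\ep(x,s)+\ep)^{\af}}\,dx.
\]

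Finally I would integrate this differential inequality over $s\in[t,T]$, use the terminal condition $u^\ep(\cdot,T)=u^T$ from \eqref{eq:itbcreg}, and rearrange to obtain exactly \eqref{lhe2}. There is no real obstacle here: the only mild subtlety is the tacit sign condition $\zeta_0\ge 0$ needed to discard the $H$-term, which is how the statement is meant to be applied (with $\zeta_0$ typically a probability density or an approximation of a Dirac mass so that $\zeta$ is the heat semigroup acting on $\zeta_0$).
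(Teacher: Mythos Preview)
Your proposal is correct and follows essentially the same route as the paper: compute $\frac{d}{ds}\int u^\ep\zeta$, cancel the Laplacian terms by integration by parts on $\Tt^d$, use $H\ge 0$ from A\ref{sub1} together with $\zeta\ge 0$ to drop the $H$-term, and integrate in $s$. Your explicit remark that the argument tacitly requires $\zeta_0\ge 0$ (so that $\zeta\ge 0$ by the maximum principle) is a useful clarification that the paper leaves implicit.
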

\begin{Remark}
The solution of the heat equation \eqref{diflaw2} gives the density of 
the solution to the stochastic differential equation \eqref{dyn} with $\bv=0$. This choice of control is sub-optimal for the corresponding control problem and, hence,  the bound in \eqref{lhe2}. \
\end{Remark}
\begin{proof}
First, we multiply
\eqref{diflaw2} by $u$ and multiply the first equation in \eqref{eq:mfgreg} by 
$\zeta$. Next, we subtract these equations and integrate in $\Tt^d$ to conclude that 
\[
\frac{d}{dt}\int_{\Tt^d}u\zeta\, dx=\int_{\Tt^d}\Bigl(H(x,Du) 
+\frac 1{(m+\ep)^\af}\Bigr)\zeta\, dx. 
\]
Because of A\ref{sub1}, $H\ge 0$. Thus, integrating in time, we obtain the result. 
\end{proof}

Natural choices for $\zeta_0$ include the Lebesgue measure,  the measure $m^0$,  or $\zeta_0=\delta_{x_0}$ for $x_0\in \Tt^d$. The latter yields pointwise estimates.

\begin{cor}
Assume A\ref{sub1} holds. Let $(u^\ep,m^\ep)$ solve Problem \ref{P2}.
Then, we have the following two estimates: 
\begin{equation}
\label{etreze}
\int_{\Tt^d}u^\ep(x,0)m^0 \,dx\le
-\int_0^T\int_{\Tt^d}\frac {\mu(x,t) \,dx\, dt}{(m (x,t)+\ep)^\af} 
+\int_{\Tt^d} u^\ep(x,T) \mu(x,T)\,dx,
\end{equation}
where $\mu(x,t)$ is the solution to the heat equation \eqref{diflaw2} with $\mu(x,0)=m^0$, 
and
\begin{equation}
\label{ecatorze}
\int_{\Tt^d} u^\ep(x, 0)\,dx\le -\int_0^T \int_{\Tt^d}
\frac {dx\, dt}{(m (x,t)+\ep)^\af}+\int_{\Tt^d} u^\ep(x,T) \,dx. 
\end{equation}
\end{cor}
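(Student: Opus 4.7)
The plan is to obtain both inequalities as direct specializations of Proposition \ref{plb}, in each case taking the initial time to be $t=0$ so that the left-hand side of \eqref{lhe2} reduces to an integral against $u^\ep(\cdot,0)$. In both invocations the terminal boundary term $\int_{\Tt^d} u^T(x)\zeta(x,T)\,dx$ in \eqref{lhe2} is rewritten as $\int_{\Tt^d} u^\ep(x,T)\zeta(x,T)\,dx$ by means of the terminal condition in \eqref{eq:itbcreg}.

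For \eqref{etreze}, I would apply Proposition \ref{plb} with $\zeta_0(x) = m^0(x)$. By uniqueness of the Cauchy problem \eqref{diflaw2} on $\Tt^d$, the solution produced by the proposition coincides with $\mu$ as defined in the statement of the corollary; substituting $\zeta = \mu$ into \eqref{lhe2} yields \eqref{etreze} verbatim. For \eqref{ecatorze}, I would instead take $\zeta_0 \equiv 1$. Because constants are stationary under the heat semigroup on the flat torus, the corresponding solution is $\zeta(x,s)\equiv 1$ for all $s\in[0,T]$, and plugging this into \eqref{lhe2} gives \eqref{ecatorze}.

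Because the corollary is a direct application of Proposition \ref{plb}, there is no real obstacle; the only point worth checking is that the proof of Proposition \ref{plb} implicitly requires $\zeta \ge 0$ (it drops a term of the form $\int H\zeta\,dx$ using A\ref{sub1}, so nonnegativity of $\zeta$ is needed). This is automatic in both specializations: for $\zeta_0 = m^0 > 0$ and for $\zeta_0 \equiv 1$, the parabolic maximum principle keeps $\zeta(\cdot,s) \ge 0$ (in fact strictly positive) for all $s \in [0,T]$.
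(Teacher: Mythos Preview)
Your proof is correct and follows exactly the paper's approach: the paper's proof consists of the single sentence ``Both estimates follow from Proposition \ref{plb}, by setting $\zeta_0=m^0$ and $\zeta_0=1$.'' Your additional remarks about the terminal condition $u^\ep(\cdot,T)=u^T$ and the nonnegativity of $\zeta$ (needed when dropping $\int H\zeta$) are valid clarifications that the paper leaves implicit.
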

\begin{proof}
Both estimates follow from 
Proposition \ref{plb}, by setting $\zeta_0=m^0$ and $\zeta_0=1$.  
\end{proof}

Next, we obtain a first-order estimate for solutions to Problem \ref{P2}. For  $f:\Tt^d\to\Rr^d$, we define
\[
\osc_{\Tt^d} f\,:=\,\sup_{x\in\Tt^d} f(x)\,-\,\inf_{x\in\Tt^d} f(x). 
\]
\begin{Proposition}
\label{pehm}
Assume A\ref{sub1}-A\ref{sub2} hold and $\alpha\neq 1$. Then, there exists a constant $C>0,$ such that for any solution $(u^\ep,m^\ep)$ of Problem \ref{P2}, we have
\begin{equation}
\label{ihm}
\int_0^T \int_{\Tt^d} c H(x, D_xu) m^\ep
+\frac{(m^\ep+\ep)^{1-\af}}{\af-1} \,dx dt 
\le CT+C \osc_{\Tt^d} u^T.
\end{equation}
\end{Proposition}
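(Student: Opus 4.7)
My strategy mirrors the derivation of the stationary first-order estimate in Lemma \ref{lem_foesta}, adapted to the time-dependent setting, with one extra twist: the time-boundary traces must be handled via the sub-optimal control bound in Proposition \ref{plb}, since the coupling is unbounded from below and I cannot hope for a pointwise lower bound on $u^\ep$. Concretely, the plan is to multiply the Hamilton-Jacobi equation in \eqref{eq:mfgreg} by $m^\ep$, the Fokker-Planck equation by $-u^\ep$, add them, and integrate over $\Tt^d$. On the periodic torus the diffusion terms cancel and the transport term reduces to $-\int D_pH(x, Du^\ep)\cdot Du^\ep \, m^\ep$, yielding the identity
\[
\frac{d}{dt}\int_{\Tt^d} u^\ep m^\ep \,dx + \int_{\Tt^d}\bigl(D_pH(x,Du^\ep)\cdot Du^\ep - H(x,Du^\ep)\bigr) m^\ep \,dx = \int_{\Tt^d} \frac{m^\ep}{(m^\ep+\ep)^\af} \,dx.
\]
Integrating in time over $[0,T]$ and invoking Assumption A\ref{sub2} gives
\[
\frac{1}{C_2}\int_0^T\!\!\int_{\Tt^d} H m^\ep \,dxdt \le CT + \int_{\Tt^d} u^\ep(\cdot,0) m^0 \,dx - \int_{\Tt^d} u^T m^\ep(\cdot,T) \,dx + \int_0^T\!\!\int_{\Tt^d}\frac{m^\ep}{(m^\ep+\ep)^\af}\,dxdt,
\]
with $C$ depending only on $\int_{\Tt^d} m^0\,dx$.

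The next step is to remove the initial trace $\int u^\ep(\cdot,0) m^0$. Applying Proposition \ref{plb} with $\zeta_0 = m^0$, the corresponding heat profile $\mu$ satisfies $0<c_0\le\mu\le C_0$ uniformly in $(x,t)$ by the parabolic maximum principle, and the proposition supplies
\[
\int_{\Tt^d} u^\ep(\cdot,0) m^0 \,dx \le \int_{\Tt^d} u^T \mu(\cdot,T)\,dx - \int_0^T\!\!\int_{\Tt^d}\frac{\mu}{(m^\ep+\ep)^\af}\,dxdt.
\]
Substituting, the boundary terms consolidate into $\int_{\Tt^d} u^T(\mu(\cdot,T) - m^\ep(\cdot,T))\,dx$ while the coupling contribution becomes $\int_0^T\!\!\int_{\Tt^d}(m^\ep - \mu)(m^\ep+\ep)^{-\af}\,dxdt$. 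To the latter I would apply the pointwise inequality
\[
\frac{m^\ep - \mu}{(m^\ep+\ep)^\af} \le \frac{(\mu+\ep)^{1-\af} - (m^\ep+\ep)^{1-\af}}{\af-1},
\]
which follows from the concavity of $z \mapsto z^{1-\af}/(1-\af)$ (valid for $\af\ne 1$). After integration, the second summand on the right is exactly the quantity to be transferred to the left-hand side of the claim, while the first summand is bounded by $CT$ thanks to the uniform bounds on $\mu$. Lastly, for $\int_{\Tt^d} u^T(\mu(\cdot,T) - m^\ep(\cdot,T))\,dx$, I would use mass conservation of both the heat equation and the Fokker-Planck equation, which yields $\int_{\Tt^d}(\mu(T)-m^\ep(T))\,dx = 0$, and subtract the constant $(\sup u^T + \inf u^T)/2$ from $u^T$ to bound this integral by $C\osc_{\Tt^d} u^T$.

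The main obstacle is precisely the absence of a pointwise lower bound on $u^\ep$, which would normally allow control of $\int u^\ep(\cdot,0) m^0$ by the maximum principle but fails here because $-(m^\ep+\ep)^{-\af}$ is not bounded from below uniformly in $\ep$. The decisive ingredient is the use of Proposition \ref{plb} with the non-trivial weight $\zeta_0 = m^0$ (rather than the trivial $\zeta_0 = 1$): this is precisely what produces a residual coupling term $(m^\ep-\mu)/(m^\ep+\ep)^\af$ that the concavity inequality can absorb into $(m^\ep+\ep)^{1-\af}/(\af-1)$, thereby closing the estimate and yielding the claim with $c = 1/C_2$.
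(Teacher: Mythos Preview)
Your proposal is correct and follows essentially the same route as the paper: derive the energy identity by multiplying the HJ equation by $m^\ep$ and the Fokker--Planck equation by $-u^\ep$, invoke A\ref{sub2}, replace the initial trace via Proposition \ref{plb} with $\zeta_0=m^0$ (the paper's estimate \eqref{etreze}), apply the same convexity/concavity inequality to $(m^\ep-\mu)(m^\ep+\ep)^{-\af}$, and bound the boundary term by $\osc_{\Tt^d}u^T$ using mass conservation. Your justification of the last step (subtracting $(\sup u^T+\inf u^T)/2$) is in fact more explicit than the paper's, which simply writes $\osc u(\cdot,T)$ without comment.
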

\begin{proof}
We have
\[
\frac{d}{dt}\int_{\Tt^d}u m \,dx+\int_{\Tt^d}(D_pHD_xu-H)m \,dx 
=\int_{\Tt^d}\frac{m }{(m+\ep)^\af}\,dx.
\]
From the preceding identity,  we get
\begin{align*}
\int_0^T\int_{\Tt^d} (D_pH D_xu-H) m \,dx\,dt=
&\int_0^T\int_{\Tt^d} \frac{m}{(m+\ep)^\af}\,dx\\&+\int_{\Tt^d}\left( u(x,0)
  m(x, 0)- u(x,T) m(x, T)\right)\,dx.  
\end{align*}
Accordingly, using Assumption A\ref{sub2}, 
\begin{align*}
c \int_0^T\int_{\Tt^d} H(x,Du) m \le 
& \int_0^T\int_{\Tt^d} \frac{m}{(m+\ep)^\af}\,dx dt \\
&+\int_{\Tt^d}\left(u(x,0) m(x,0)- u(x,T) m(x,T)\right)\,dx+CT. 
\end{align*}
Now, we use  \eqref{etreze} to conclude
that
\begin{align*}
c \int_0^T\int_{\Tt^d}H(x,Du)m\, dx\,dt \le 
CT&+\int_{\Tt^d} u(x,T)(\mu(x,t)- m(x,T))\,dx\\
&+\int_0^T\int_{\Tt^d} \frac{(m -\mu)}{(m+\ep)^\af}\,dx\,dt.  
\end{align*}
Next, because
\[
        z\mapsto z^{1-\af}/(\af-1)
\]      
is a convex function and $\alpha\neq 1$, we have 
\[
\frac{m-\mu}{(m+\ep)^\af} \le\frac{(\mu+\ep)^{1-\af} -(m+\ep)^{1-\af}}
{\af-1}. \]
Therefore, \begin{align*}
c \int_0^T\int_{\Tt^d} H(x, Du) m &+\int_0^T
\int_{\Tt^d} \frac{(m+\ep)^{1-\af}}{\af-1} \,dx\, dt\\ 
&\
\le CT+\osc u(\cdot , T)+\int_0^T\int_{\Tt^d}
\frac{(\mu +\ep)^{1-\af}}{\af-1} \,dx\, dt.    
\end{align*}
Finally, because $\min m^0\le\mu\le\max m^0$, we have that $(\mu+\ep)^{1-\af}$ is
uniformly bounded and, thus,  \eqref{ihm} follows. 
\end{proof}

\begin{Proposition}\label{prop p1}
 Assume A\ref{sub1}-A\ref{sub2} hold. Then,
there exist constants, $c>0$ and $C>0$, independent on $\epsilon$, such that for any solution $(u^\ep,m^\ep)$ of Problem \ref{P2}, we have
                \[
                \int_0^T\int_{\Tt^d}\frac{1}{(m^\ep+\ep)^\af}\,
+c\int_0^T\int_{\Tt^d}H(x,Du^\ep)m^\ep \,dx\, dt+c \int_0^T\int_{\Tt^d}H(x,Du^\ep)\,dx\, dt\le\,C.
                \]      
\end{Proposition}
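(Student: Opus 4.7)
The plan is to combine Proposition \ref{pehm} (which already delivers $c\int_0^T\!\!\int H(x,Du^\ep)m^\ep\le C$) with matching upper and lower bounds for the weighted integral $\int_{\Tt^d}u^\ep(\cdot,0)\,m^0\,dx$. Writing $\tilde m := m^\ep+\ep$ throughout, what remains is to show $\int_0^T\!\!\int H(x,Du^\ep)\,dx\,dt + \int_0^T\!\!\int \tilde m^{-\af}\,dx\,dt \le C$ uniformly in $\ep$.

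For the upper bound on $\int u^\ep(\cdot,0)\,m^0$, I will refine the derivation of \eqref{etreze} by retaining the positive $H$-contribution that is dropped there. Let $\mu$ solve the forward heat equation \eqref{diflaw2} on $\Tt^d\times[0,T]$ with $\mu(\cdot,0)=m^0$. Multiplying the Hamilton-Jacobi equation in \eqref{eq:mfgreg} by $\mu$ and using $\mu_t=\Delta\mu$, the Laplacian contributions cancel after integration by parts and yield the identity
\[
\int_{\Tt^d} u^T\mu(\cdot,T)\,dx - \int_{\Tt^d} u^\ep(\cdot,0)\,m^0\,dx \;=\; \int_0^T\!\!\int_{\Tt^d} H(x,Du^\ep)\,\mu\,dx\,dt \;+\; \int_0^T\!\!\int_{\Tt^d} \mu\,\tilde m^{-\af}\,dx\,dt.
\]
Since $\beta:=\min_{\Tt^d}m^0>0$, the parabolic maximum principle gives $\mu\ge\beta$ on $\Tt^d\times[0,T]$; combined with the boundedness of $u^T$ and the $L^1$-conservation $\|\mu(\cdot,T)\|_{L^1}=1$, this yields
\[
\int_{\Tt^d} u^\ep(\cdot,0)\,m^0\,dx \;\le\; C \;-\; \beta\Big(\int_0^T\!\!\int H(x,Du^\ep)\,dx\,dt + \int_0^T\!\!\int \tilde m^{-\af}\,dx\,dt\Big).
\]

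For the matching lower bound, I will reuse the identity $\tfrac{d}{dt}\int u^\ep m^\ep + \int(D_pH\cdot Du^\ep - H)m^\ep = \int m^\ep/\tilde m^{\af}$ from the proof of Proposition \ref{pehm}. Together with Assumption A\ref{sub2}, integration in time produces
\[
\int_{\Tt^d} u^\ep(\cdot,0)\,m^0\,dx \;\ge\; -\|u^T\|_\infty - C_1 T + \tfrac{1}{C_2}\int_0^T\!\!\int H(x,Du^\ep)\,m^\ep\,dx\,dt - \int_0^T\!\!\int \tfrac{m^\ep}{\tilde m^{\af}}\,dx\,dt.
\]
The last remainder is dominated by $\int_0^T\!\!\int \tilde m^{1-\af}$, which is uniformly bounded: by Proposition \ref{pehm} when $\af>1$, and by Jensen's inequality (since $\int\tilde m \le 1 + \ep|\Tt^d|$ and $1-\af\in(0,1)$) when $0<\af<1$. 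Dropping the nonnegative $Hm^\ep$-term, this gives $\int u^\ep(\cdot,0)\,m^0 \ge -C$ uniformly in $\ep$.

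Comparing the upper and lower bounds yields $\beta\big(\int\int H + \int\int \tilde m^{-\af}\big) \le 2C$, which, together with Proposition \ref{pehm}, closes the argument. The key obstacle—and the reason a naive integration of the Hamilton-Jacobi equation against the constant $1$ fails—is that the individual quantities $\int\int H$ and $\int\int \tilde m^{-\af}$ must be isolated without any $m^\ep$-weight. The specific choice of the heat solution $\mu$ with datum $m^0$ as test function is essential: only with $\mu_t=\Delta\mu$ do the second-order terms cancel, producing an identity whose right-hand side is a sum of two nonnegative quantities; and only the uniform positivity $\mu\ge\beta$ converts the resulting estimate into unweighted bounds compatible with the lower bound from the $(um)_t$ computation.
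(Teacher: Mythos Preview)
Your argument is correct and reaches the conclusion, but it is organised differently from the paper's proof. The paper obtains all three terms in one stroke by testing the Hamilton--Jacobi equation against $m^\ep-m^0$ and the Fokker--Planck equation against $u^T-u^\ep$, adding, and integrating by parts; this produces an identity with $\int_0^T\!\int \frac{m^0+\ep}{(m^\ep+\ep)^\af}$ on the left and $-c\int\!\int H m^\ep - \int\!\int H m^0 - \int\!\int Du^\ep\cdot Dm^0$ (plus bounded terms) on the right, after which the positivity $m^0\ge\min m^0>0$ turns the weighted integrals into the unweighted ones and a Young inequality absorbs the $Du^\ep\cdot Dm^0$ remainder. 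Your route instead sandwiches the single scalar $\int u^\ep(\cdot,0)m^0$ between an upper bound (testing the HJ equation against the heat solution $\mu$ with datum $m^0$, retaining the $H\mu$ term that Proposition~\ref{plb} discards) and a lower bound (the $\tfrac{d}{dt}\!\int u^\ep m^\ep$ identity from Proposition~\ref{pehm}). Both arguments hinge on the same mechanism---the strict positivity of $m^0$ is what converts $m^0$-weighted quantities into unweighted ones---but your decomposition is more modular, recycling Propositions~\ref{plb} and~\ref{pehm} rather than setting up a fresh energy identity.

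Two minor remarks. First, your invocation of Proposition~\ref{pehm} for the $\int\!\int Hm^\ep$ term is unnecessary: if you keep (rather than drop) the nonnegative $\tfrac{1}{C_2}\int\!\int Hm^\ep$ contribution in your lower bound and then compare with the upper bound, all three terms of the statement fall out simultaneously. Second, Proposition~\ref{pehm} carries the hypothesis $\alpha\neq 1$, so as written your proof has a small gap at $\alpha=1$; but there $\int\!\int (m^\ep+\ep)^{1-\af}=T|\Tt^d|$ and $m^\ep/(m^\ep+\ep)\le 1$, so the remainder in your lower bound is trivially controlled and no appeal to Proposition~\ref{pehm} is needed.
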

\begin{proof}
        For ease of presentation, we drop the superscript $\ep$ in what follows.
        We begin by multiplying the first equation in \eqref{eq:mfgreg} by $(m-m^0)$
        and the second one by $(u^T-u)$. Next, adding the resulting expressions  and
        integrating by parts, we obtain: 
        \begin{align*}
                -\frac{d}{dt}\int_{\Tt^d}\left(u-u^T\right)\left(m-m^0\right)
                \,+&\,\int_{\Tt^d}H(m-m^0)-m\,D_pH\cdot D(u-u^T)\\
&=\,\int_{\Tt^d}u^T\De m\,-\,m^0\De u\,-\,\int_{\Tt^d}\frac{m-m^0}{\left(m+\ep\right)^{\af}}.
        \end{align*}
Integrating in $[0,T]$, we have
\begin{align*}
  \int_0^T\int_{\Tt^d}\frac{m^0+\ep}{(m+\ep)^\af}&
=\int_0^T\int_{\Tt^d}(m+\ep)^{1-\af}+(H-D_pH\cdot Du)m-Hm^0\\
&+\int_0^T\int_{\Tt^d}u^T(-\div(D_pHm)-\De m)+m^0\De u\\
&\le C+\int_{\Tt^d}u^T(m^0-m)-\int_0^T\int_{\Tt^d}cHm+Hm^0+Du\cdot Dm^0.
\end{align*}
The prior inequality follows from  the boundedness  of
$$
        \int\limits_0^T\int\limits_{\Tt^d}(m+\ep)^{1-\af}dx,
$$
which holds,  for $0<\af\le 1$, because $m$ is a probability density,  and, for $\af>1$, due to \eqref{ihm}.
\end{proof}

\subsection{Regularity for the Fokker-Planck equation}\label{rfp}
Next, we  use the structure of the Fokker-Plank equation to improve the  a priori  integrability for the singularity. In what follows, we set
\[
        2^*\,=\,\frac{2d}{d\,-\,2},
\]
the Sobolev conjugated exponent of 2.

\begin{teo}\label{em0}
Assume A\ref{sub1}-A\ref{sub2} hold. There exists $C>0$, independent of $\epsilon$, such that for any solution $(u^\ep,m^\ep)$ to Problem \ref{P2}, we have
\[
\Big\|\frac 1{m^\ep+\ep}\Big\|_{L^\infty([0,T],L^{\frac{\af(2-\ga)}{\ga}}(\Tt^d))}\,+\,
 \int_0^T \Big\|\frac 1{m^\ep+\ep}\Big\|_\frac{2^*\af(2-\ga)}{2\ga}^\frac{\af(2-\ga)}{\ga}\,\leq\, C,
 \]
uniformly in $\epsilon$.
\end{teo}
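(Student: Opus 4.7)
The plan is to test the Fokker-Planck equation in \eqref{eq:mfgreg} with $(m^\ep+\ep)^{-r}$, where $r:=1+\sigma$ and $\sigma:=\af(2-\ga)/\ga$. Note that $\sigma>0$ since Assumption A\ref{gamma} forces $1<\ga<2$, and the two quantities to be bounded are precisely $\int_{\Tt^d}(m^\ep+\ep)^{-\sigma}\,dx=\|(m^\ep+\ep)^{-1}\|_{L^\sigma(\Tt^d)}^\sigma$ and $\bigl(\int_{\Tt^d}(m^\ep+\ep)^{-\sigma 2^*/2}\,dx\bigr)^{2/2^*}=\|(m^\ep+\ep)^{-1}\|_{L^{\sigma 2^*/2}(\Tt^d)}^\sigma$. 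Integration by parts (dropping the superscript $\ep$) produces the identity
\[
\frac{d}{dt}\int_{\Tt^d}\frac{(m+\ep)^{-\sigma}}{\sigma}\,dx+r\int_{\Tt^d}(m+\ep)^{-r-1}|Dm|^2\,dx=-r\int_{\Tt^d}(m+\ep)^{-r-1}m\,Dm\cdot D_pH(x,Du)\,dx.
\]
Introducing $w:=(m+\ep)^{-\sigma/2}$, the second term on the left is a positive constant multiple of $\|Dw\|_{L^2(\Tt^d)}^2$. Using $m\le m+\ep$ together with Cauchy-Schwarz and Young, I can absorb half of this term into the left and reduce the right-hand side to $C\int_{\Tt^d}(m+\ep)^{-\sigma}|D_pH|^2\,dx$.

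The decisive step is a further Young inequality with the conjugate exponents $p=\ga/(2-\ga)$ and $p'=\ga/(2(\ga-1))$, both strictly larger than $1$ precisely because $1<\ga<2$. A direct computation gives $\sigma p=\af$ and, by Assumption A\ref{sub1}, $|D_pH|^{2p'}=|D_pH|^{\ga/(\ga-1)}\le C(1+|Du|^\ga)\le C(1+H(x,Du))$. Consequently,
\[
(m+\ep)^{-\sigma}|D_pH|^2\,\le\,\tfrac{1}{p}(m+\ep)^{-\af}+C(1+H),
\]
and, critically, both $\int_0^T\!\int_{\Tt^d}(m+\ep)^{-\af}\,dx\,dt$ and $\int_0^T\!\int_{\Tt^d}(1+H)\,dx\,dt$ are already controlled uniformly in $\ep$ by Proposition \ref{prop p1}. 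Integrating the resulting differential inequality on $[0,t]$, and noting that smoothness and strict positivity of $m^0$ yield $\int_{\Tt^d}(m^0+\ep)^{-\sigma}\,dx\le C$ uniformly in $\ep$, I obtain
\[
\sup_{t\in[0,T]}\int_{\Tt^d} w(t)^2\,dx\,+\,\int_0^T\!\int_{\Tt^d}|Dw|^2\,dx\,dt\,\le\,C.
\]

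Finally, the Sobolev embedding $H^1(\Tt^d)\hookrightarrow L^{2^*}(\Tt^d)$ (replaced in $d=2$ by $H^1\hookrightarrow L^q$ for any $q<\infty$) upgrades the previous display to $\int_0^T\|w(t)\|_{L^{2^*}}^2\,dt\le C$. Unwinding via $\int w^2\,dx=\|(m+\ep)^{-1}\|_{L^\sigma}^\sigma$ and $\|w\|_{L^{2^*}}^2=\|(m+\ep)^{-1}\|_{L^{\sigma 2^*/2}}^\sigma$ produces exactly the two terms in the statement. The hard part will be calibrating the Young exponent $p=\ga/(2-\ga)$ so that both bulk terms generated by the drift estimate fall into spaces already controlled by the first-order bounds of Proposition \ref{prop p1}; this single choice is what dictates the exponent $\sigma=\af(2-\ga)/\ga$ appearing in the conclusion.
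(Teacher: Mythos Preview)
Your proposal is correct and follows essentially the same route as the paper: test the Fokker--Planck equation against a negative power of $m^\ep+\ep$, absorb half the diffusion term via Cauchy--Schwarz, then apply Young's inequality with exponents $\ga/(2-\ga)$ and $\ga/(2(\ga-1))$ so that the two bulk terms become $(m^\ep+\ep)^{-\af}$ and $|D_pH|^{\ga/(\ga-1)}\le C(1+H)$, both controlled by Proposition~\ref{prop p1}; finish with Sobolev on $w=(m^\ep+\ep)^{-\sigma/2}$. The only cosmetic differences are notational (the paper writes the computation as $\tfrac{d}{dt}\int(m+\ep)^{-\beta}$ with $\beta=\sigma$ rather than as a pairing with $(m+\ep)^{-(1+\sigma)}$) and that you invoke A\ref{gamma} to justify $1<\ga<2$, whereas the paper leaves this implicit in the hypotheses A\ref{sub1}--A\ref{sub2}; either way the exponent $\af(2-\ga)/\ga$ only makes sense for $\ga<2$, so this is harmless.
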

\begin{proof} As before, we drop the superscript $\ep$. Fix $\beta>0$, arbitrarily. From the second
  equation in \eqref{eq:mfgreg}, we have
  \begin{align}\notag
\frac{d}{dt}\int_{\Tt^d} \frac{dx}{(m+\ep)^\be} &= -\be(\be+1)
\left(\int_{\Tt^d} \frac{D_pH\cdot D_xm}{(m+\ep)^{\be+1}}dx
+\int_{\Tt^d} \frac{|D_xm|^2}{(m+\ep)^{\be+2}} dx\right)\\\notag
&\le \frac{\be(\be+1)}2\left(\int_{\Tt^d} \frac{|D_pH|^2}{(m+\ep)^{\be}}dx
 -\int_{\Tt^d} \frac{|D_xm|^2}{(m+\ep)^{\be+2}} dx\right)\\
&\le \frac{\be(\be+1)}2\int_{\Tt^d} \frac{|D_pH|^2}{(m+\ep)^{\be}}dx 
-\frac{2(\be+1)}{\be}\int_{\Tt^d}|D_x(m+\epsilon)^{\frac{-\be}{2}}|^2dx.\label{fokker}
\end{align}
Using 
Young's inequality, we get\begin{align} 
\frac{\be(\be+1)}2\int_{\Tt^d} \frac{|D_pH|^2}{(m+\ep)^{\be}}dx
&\le C_\de \int_{\Tt^d} |D_pH|^{\ga/(\ga-1)}dx+ \de \int_{\Tt^d}\frac{dx}{(m+\ep)^{\be\ga/(2-\ga)}}.
\label{young}
\end{align}
Integrating \eqref{fokker} in time in $[0,\tau]$, with $0\leq \tau\leq T$ and using \eqref{young}, we obtain
\begin{multline}\label{igm-eq2}
\int_{\Tt^d} \frac{dx}{(m(x,\tau)+\ep)^\be}
+\frac{2(\be+1)}{\be}\int_0^\tau\int_{\Tt^d}|D_x(m+\epsilon)^{\frac{-\be}{2}}|^2dx\,dt\\
\le\int_{\Tt^d} \frac{dx}{(m^0+\ep)^\be(x)}+ C_\de  \int_0^\tau \int_{\Tt^d} |D_pH|^{\ga/(\ga-1)}dx\, dt 
+ \de \int_0^\tau \int_{\Tt^d}\frac{dx\, dt}{(m+\ep)^{\be\ga/(2-\ga)}}.
\end{multline}

By choosing $\be=\af(2-\ga)/\ga$ and using Proposition \ref{prop p1}, we get from \eqref{igm-eq2} that 
\begin{equation}
\label{est}
\int_{\Tt^d} \frac{dx}{(m(x,\tau)+\ep)^{\af(2-\ga)/\ga}}
+\int_0^\tau\int_{\Tt^d}|D_x(m+\epsilon)^{\frac{\af(\ga-2)}{2\ga}}|^2dx\, dt\le C.
\end{equation}

Now, we observe that  Sobolev's inequality yields\[ \Big\|\frac 1{m+\ep}\Big\|_\frac{2^*\be}2^\be
\le C\left(\int_{\Tt^d} \frac{dx}{(m(x,t)+\ep)^\be}+
\int_{\Tt^d}|D_x(m+\epsilon)^{-\frac\be 2}|^2dx\right).  \]
Accordingly, by setting 
\[
        \be=\af(2-\ga)/\ga
\]
and using \eqref{est}, we obtain that 
\[\int_0^T \Big\|\frac 1{m+\ep}\Big\|_\frac{2^*\af(2-\ga)}{2\ga}^\frac{\af(2-\ga)}{\ga}\le C.\]
\end{proof}

\begin{teo}
\label{em}
Assume A\ref{sub1}-A\ref{gamma} hold. Then, for any $r\ge 1$
 there exists $C_r>0,$ such that for any solution $(u^\ep,m^\ep)$ of Problem \ref{P2}, we have\[
\Big\|\dfrac 1{m^\ep+\ep}\Big\|_{L^\infty([0,T],L^r(\Tt^d))}\,\leq\,C_r,
\]
uniformly in $\epsilon$.
\end{teo}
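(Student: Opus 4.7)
The plan is to bootstrap the estimate \eqref{igm-eq2} of Theorem \ref{em0} through a Moser--Alikakos-type iteration, successively upgrading the integrability of $1/(m^\epsilon+\epsilon)$ until we reach any prescribed $L^\infty_tL^r_x$ bound. Two ingredients will be used at every step, uniformly in $\epsilon$. First, by Assumption A\ref{sub1}, $|D_pH(x,Du^\epsilon)|^{\gamma/(\gamma-1)}\le C(1+|Du^\epsilon|^\gamma)$, and Proposition \ref{prop p1} together with A\ref{sub1} gives $\int_0^T\!\int_{\Tt^d}|Du^\epsilon|^\gamma\,dx\,dt\le C$, so the $|D_pH|$-term in \eqref{igm-eq2} is harmless. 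Second, because $m^0\in C^\infty(\Tt^d)$ is strictly positive, $\int_{\Tt^d}(m^0+\epsilon)^{-\beta}\,dx$ stays uniformly bounded in $\epsilon$ for each fixed $\beta$. The delicate singular term $\delta\int_0^\tau\!\int_{\Tt^d}(m^\epsilon+\epsilon)^{-\beta\gamma/(2-\gamma)}\,dx\,dt$ on the right of \eqref{igm-eq2} will be absorbed using integrability already established at the previous step, provided $\beta$ is chosen so that the exponents match.

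Concretely, set $\phi_\beta:=(m^\epsilon+\epsilon)^{-\beta/2}$ and recall the parabolic Gagliardo--Nirenberg embedding $L^\infty_tL^2_x\cap L^2_tH^1_x\hookrightarrow L^{2(d+2)/d}_{t,x}$, which converts any bound $\sup_\tau\int_{\Tt^d}(m^\epsilon+\epsilon)^{-\beta}\,dx+\int_0^T\!\int_{\Tt^d}|D\phi_\beta|^2\,dx\,dt\le C$ into $\int_0^T\!\int_{\Tt^d}(m^\epsilon+\epsilon)^{-\beta(d+2)/d}\,dx\,dt\le C$. Theorem \ref{em0} combined with this embedding yields the starting estimate $\int_0^T\!\int_{\Tt^d}(m^\epsilon+\epsilon)^{-\beta_0^*}\,dx\,dt\le C$ with $\beta_0^*=\alpha(2-\gamma)(d+2)/(\gamma d)$. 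Assuming inductively that $\int_0^T\!\int_{\Tt^d}(m^\epsilon+\epsilon)^{-\beta_k^*}\,dx\,dt\le C_k$, one chooses $\beta_{k+1}:=\beta_k^*(2-\gamma)/\gamma$ in \eqref{igm-eq2}; the singular right-hand side then has exponent exactly $\beta_k^*$ and is absorbed for $\delta$ small, producing
\[
\sup_{\tau\in[0,T]}\int_{\Tt^d}(m^\epsilon+\epsilon)^{-\beta_{k+1}}\,dx+\int_0^T\!\int_{\Tt^d}|D\phi_{\beta_{k+1}}|^2\,dx\,dt\le C_{k+1}.
\]
A further application of the parabolic embedding yields $\beta_{k+1}^*=\beta_{k+1}(d+2)/d$, whence the recursion $\beta_{k+1}^*=\kappa\,\beta_k^*$ with $\kappa:=(2-\gamma)(d+2)/(\gamma d)$.

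Assumption A\ref{gamma}, $\gamma<(d+2)/(d+1)$, is algebraically equivalent to $\kappa>1$; consequently $\beta_k^*\to\infty$, and combined with the intermediate bounds $\|1/(m^\epsilon+\epsilon)\|_{L^\infty_tL^{\beta_{k+1}}_x}\le C_{k+1}$ produced at each step, one reaches any prescribed $L^\infty_tL^r_x$ in finitely many iterations, proving the theorem. The main obstacle is the bookkeeping of the constants $C_k$, which inflate geometrically along the chain; this is inconsequential since only finitely many iterations are needed for any fixed $r$. The essential point, and the sole place where A\ref{gamma} intervenes, is precisely that $\kappa>1$, which is what makes the iteration strictly expanding.
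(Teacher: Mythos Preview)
Your proposal is correct and follows the same overall strategy as the paper: a Moser--Alikakos iteration of the differential inequality \eqref{igm-eq2}, using the bound on $\int_0^T\!\int_{\Tt^d}|D_pH|^{\gamma/(\gamma-1)}$ from Proposition~\ref{prop p1} and the positivity of $m^0$. The only genuine difference is the interpolation used to close the loop. You invoke the parabolic embedding $L^\infty_tL^2_x\cap L^2_tH^1_x\hookrightarrow L^{2(d+2)/d}_{t,x}$, obtaining a space--time bound $\int_0^T\!\int_{\Tt^d}(m^\epsilon+\epsilon)^{-\beta_k^*}\,dx\,dt\le C_k$ and feeding it back into the right-hand side of \eqref{igm-eq2} at the next step; this gives the gain factor $\kappa=(2-\gamma)(d+2)/(\gamma d)$. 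The paper instead works at each fixed time: it interpolates in space between $L^{\beta_n}_x$ (controlled by the induction hypothesis in $L^\infty_t$) and $L^{2^*\beta_{n+1}/2}_x$ (controlled by the elliptic Sobolev inequality and the gradient term), then absorbs the singular term by choosing $\delta$ small; this gives the gain factor $(2-\gamma)/((\gamma-1)d)$. Both factors exceed $1$ precisely when $\gamma<(d+2)/(d+1)$, i.e.\ under A\ref{gamma}, so the threshold is the same. Your route is arguably cleaner: the right-hand side of \eqref{igm-eq2} is bounded outright by the previous step rather than absorbed, so the smallness of $\delta$ plays no role (and your phrase ``absorbed for $\delta$ small'' is unnecessary). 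The paper's route has a faster gain per step but requires the extra absorption argument.
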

\begin{proof}
First, we define a sequence $(\beta_n)_{n\in\mathbb{N}}$ inductively  as follows: 
\[
        \be_{n+1}=\frac{(2-\ga)}{(\ga-1)d}\be_n,
\]
with\[
        \be_0={\af(2-\ga)}/\ga,
\]
Because Assumption A\ref{gamma} implies that
\[ \frac{ (2-\ga)}{(\ga-1)d}>1, \]

$\beta_{n}\to \infty$ as $n\to \infty$. 

Next, we claim that for any $n\in\mathbb{N,}$ there exists $C_n>0,$ such that
\[
        \int_{\Tt^d} \frac{dx}{(m(x,\tau)+\ep)^{\be_{n}}}\,\leq\,C_n,
\]
uniformly in $\epsilon$. The proof of the claim follows by induction on $n$.
For that, set 
\[
        \lam=\dfrac{2-\ga}\ga.
\]
Then,
\[
        \frac{(d-2)\lam}{d\be_{n+1}}+\frac{1-\lam}{\be_n}=\frac{2-\ga}{\be_{n+1}\ga}. 
\]

Therefore, using interpolation, we get\[
\Big\|\frac 1{m+\ep}\Big\|_{\be_{n+1}\ga/(2-\ga)}\le 
\Big\|\frac 1{m+\ep}\Big\|_{\be_n}^{1-\lam}\Big\|\frac 1{m+\ep}\Big\|_{2^*\be_{n+1}/2}^{\lam}.
\]
The induction hypothesis $\Big\|\dfrac 1{m+\ep}\Big\|_{\be_n}\le C$ implies
\begin{align}
\label{igm-eq5a}
\nonumber\int_{\Tt^d} \frac{dx}{(m+\ep)^{\be_{n+1}\ga/(2-\ga)}}
&\,=\,\Big\|\frac 1{m+\ep}\Big\|_{\be_{n+1}\ga/(2-\ga)}^{\be_{n+1}\ga/(2-\ga)}\\\nonumber
&\le\, C\Big\|\frac 1{m+\ep}\Big\|_{2^*\be_{n+1}/2}^{\lam \be_{n+1}\ga/(2-\ga)}\\&=\,
C\Big\|\frac 1{m+\ep}\Big\|_{2^*\be_{n+1}/2}^{\be_{n+1}}.
\end{align}
A further application of Sobolev's theorem produces
\[ \Big\|\frac 1{m+\ep}\Big\|_{2^*\be_{n+1}/2}^{\be_{n+1}}
\le C\left(\int_{\Tt^d} \frac{dx}{(m(x,t)+\ep)^{\be_{n+1}}}+
\int_{\Tt^d}|D_x(m+\epsilon)^{\frac{-\be_{n+1}}{2}}|^2dx\right).\]
Therefore, for each fixed $t,$ we have 
\begin{align*}
\int_{\Tt^d}\frac{dx}{(m+\ep)^{\be_{n+1}}}\le \left(\int_{\Tt^d}\frac{dx}{(m+\ep)^{\be_{n+1}\ga/(2-\ga)}}\right)^{(2-\ga)/\ga}
&\le C_\zeta+\zeta\int_{\Tt^d}\frac {dx}{(m+\ep)^{\be_{n+1}\ga/(2-\ga)}}\\
&\le C_\zeta+\zeta \Big\|\frac 1{m+\ep}\Big\|_{2^*\be_{n+1}/2}^{\be_{n+1}}.
\end{align*}
Thus,
\begin{equation}\label{igm-eq3a}
 \Big\|\frac 1{m+\ep}\Big\|_{2^*\be_{n+1}/2}^{\be_{n+1}}\le C \int_{\Tt^d}|D_x(m+\epsilon)^{\frac{-\be_{n+1}}{2}}|^2dx+
C_\zeta+\zeta \Big\|\frac 1{m+\ep}\Big\|_{2^*\be_{n+1}/2}^{\be_{n+1}}.
\end{equation}
From \eqref{igm-eq2}, \eqref{igm-eq5a}, and \eqref{igm-eq3a}, taking $\de$ and
$\zeta$ small enough, we have, for some $\de_1>0,$
\begin{align*}
\int_{\Tt^d} \frac{dx}{(m+\ep)^{\be_{n+1}}(x,\tau)}&
+\delta_1 \int_0^\tau\Big\|\frac 1{m+\ep}\Big\|_{2^*\be_{n+1}/2}^{\be_{n+1}}dt\\
&\le  C+\int_{\Tt^d} \frac{dx}{(m^0+\ep)^{\be_{n+1}}(x)}+ C\int_0^\tau\int_{\Tt^d}|D_pH|^{\ga/(\ga-1)}dx\,dt.
\end{align*}
\end{proof}

\subsection{Estimates for the Hamilton-Jacobi equation in $L^\infty(\Tt^d\times [0,T])$}\label{sec:reg-adj}

Now, we obtain uniform bounds for solutions of the
Hamilton-Jacobi equation in \eqref{eq:mfgreg}. \begin{Proposition}
\label{plest}
Assume A\ref{sub1}-A\ref{sub2} hold and let $(u^\ep,m^\ep)$ solve Problem \ref{P2}.
Then,
\begin{equation}
\label{lest}
u^\ep(x,t) \le  \max_{y\in\Tt^d}u^T(y).
\end{equation}
\end{Proposition}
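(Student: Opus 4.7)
The plan is to apply a direct parabolic maximum principle to the Hamilton-Jacobi equation in \eqref{eq:mfgreg}. Rewriting that equation in the form
\[
u^\ep_t(x,t)\,=\,H(x,Du^\ep(x,t))\,+\,(m^\ep(x,t)+\ep)^{-\af}\,-\,\De u^\ep(x,t),
\]
I would observe that the two key ingredients are already built into Assumption A\ref{sub1} (which gives $H\ge 0$) and the structure of the regularization (which gives $(m^\ep+\ep)^{-\af}>0$). Both terms have the ``right'' sign, so the upper bound should come from comparing $u^\ep$ with the constant $\max_{y\in\Tt^d}u^T(y)$ via the classical argument.

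First I would argue by contradiction. Since $u^\ep\in C^\infty(\Tt^d\times[0,T])$ and $\Tt^d$ is compact, $u^\ep$ attains its maximum at some point $(x_0,t_0)$. Suppose $t_0<T$. Spatial periodicity together with $x_0$ being an interior maximum in $x$ yields $Du^\ep(x_0,t_0)=0$ and $\De u^\ep(x_0,t_0)\le 0$; likewise, since $t\mapsto u^\ep(x_0,t)$ attains its maximum on $[0,T]$ at $t_0<T$, one has $u^\ep_t(x_0,t_0)\le 0$ (interpreted as the right derivative if $t_0=0$, which still is nonpositive because $u^\ep(x_0,t)\le u^\ep(x_0,0)$ for $t\ge 0$).

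Next I would plug these inequalities into the rewritten PDE at $(x_0,t_0)$. Using $H(x_0,0)\ge 0$, $\De u^\ep(x_0,t_0)\le 0$, and $(m^\ep(x_0,t_0)+\ep)^{-\af}>0$, I obtain
\[
u^\ep_t(x_0,t_0)\,=\,H(x_0,0)\,+\,(m^\ep(x_0,t_0)+\ep)^{-\af}\,-\,\De u^\ep(x_0,t_0)\,\ge\,(m^\ep(x_0,t_0)+\ep)^{-\af}\,>\,0,
\]
which contradicts $u^\ep_t(x_0,t_0)\le 0$. Hence the maximum must be attained at $t_0=T$, so
\[
u^\ep(x,t)\,\le\,\max_{y\in\Tt^d}u^\ep(y,T)\,=\,\max_{y\in\Tt^d}u^T(y),
\]
which is exactly \eqref{lest}.

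There is essentially no obstacle to this argument: the upper bound is the ``easy side'' of the $L^\infty$ estimate, precisely because the singular coupling $-(m^\ep+\ep)^{-\af}$ is nonpositive and therefore cooperates with the maximum principle. The hard direction, which is the matching \emph{lower} bound for $u^\ep$, is the one that cannot be obtained by the maximum principle (since the coupling is not bounded from below uniformly in $\ep$) and which, as the introduction indicates, requires the nonlinear adjoint method carried out later in Section~\ref{sec:reg-adj}. Alternatively, the same upper bound admits a stochastic-control proof: taking $\bv\equiv 0$ in the representation formula and using $L(x,0)=-\inf_pH(x,p)\le 0$ gives the estimate immediately, in the same spirit as Proposition \ref{plb}; I would mention this only as a remark, since the PDE argument above is self-contained and shorter.
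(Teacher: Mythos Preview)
Your argument is correct and is exactly the maximum principle the paper invokes in its one-line proof (using $H\ge 0$ and the negativity of the coupling); you have simply spelled out the details. The remark about the stochastic-control alternative is also consistent with the paper's point of view.
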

\begin{proof}
The upper bound follows from the maximum principle, taking into account that 
$H\ge 0$ and  that the mean-field coupling is negative. 
\end{proof}

To investigate lower bounds for $u^\epsilon$, we introduce the adjoint equation 
\begin{equation}
\label{ADJ2}
\rho_t-\De \rho -\div(D_pH \rho)=0\;\;\;\;\;\mbox{in}\;\;\;\;\;\Tt^d\times[\tau,T],
\end{equation}
with initial data
\begin{equation}
\label{ADJini2}
\rho(\cdot,\tau)=\delta_{x_0}.
\end{equation}
Combining the adjoint equation and the first equation in \eqref{eq:mfgreg}, we have the following representation formula for $u$:  
\begin{equation}
\label{7A2}
u(x_0,\tau)=\int_\tau^T\int_{\Tt^d}\left(D_pH D_xu-H-\frac 1{(m+\ep)^\af}\right)\rho dxdt+
\int_{\Tt^d}u^T(x)\rho(x,T)dx. 
\end{equation}
\begin{Proposition}
\label{pram}
Assume that A\ref{sub1}-A\ref{sub2} hold. Then, for $p,\,q>1,$ such that
\[
\dfrac 1p+\dfrac 1q=1
\] 
and any solution $(u^\ep,m^\ep)$ to Problem \ref{P2}, we have
\begin{equation}
  \label{umax}
u(x_0,\tau)\ge -C-
\Big\|\frac 1{(m+\ep)^\af}\Big\|_{L^\infty(0,T;L^p(\Tt^d))}
\|\rho\|_{L^1(\tau,T;L^q(\Tt^d))},
\end{equation}
where $\rho$ solves \eqref{ADJ2} with initial data \eqref{ADJini2}.
\end{Proposition}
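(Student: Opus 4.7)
The plan is to apply the representation formula \eqref{7A2} directly and bound each term from below. Since $\rho$ solves the Fokker--Planck type equation \eqref{ADJ2} with initial datum $\delta_{x_0}$, mass is preserved, i.e.\ $\int_{\Tt^d}\rho(\cdot,t)\,dx=1$ for all $t\in[\tau,T]$, and $\rho\ge 0$.

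First I would handle the Lagrangian-type term. By Assumption A\ref{sub2},
\[
D_pH(x,Du)\cdot Du-H(x,Du)\ge \frac{H(x,Du)}{C_2}-C_1\ge -C_1,
\]
since $H\ge 0$ by Assumption A\ref{sub1}. Multiplying by $\rho\ge 0$ and integrating in space-time yields
\[
\int_\tau^T\!\int_{\Tt^d}\bigl(D_pH\cdot D_xu-H\bigr)\rho\,dx\,dt\ \ge\ -C_1\int_\tau^T\!\int_{\Tt^d}\rho\,dx\,dt = -C_1(T-\tau).
\]

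Next, for the terminal term in \eqref{7A2}, since $u^T\in C^\infty(\Tt^d)$ and $\rho(\cdot,T)$ is a probability density,
\[
\int_{\Tt^d}u^T(x)\,\rho(x,T)\,dx\ \ge\ -\max_{\Tt^d}|u^T|.
\]

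For the singular coupling term, Hölder's inequality with conjugate exponents $p,q$ gives, for each fixed $t$,
\[
\int_{\Tt^d}\frac{\rho(x,t)}{(m(x,t)+\ep)^\af}\,dx\ \le\ \Big\|\frac1{(m(\cdot,t)+\ep)^\af}\Big\|_{L^p(\Tt^d)}\|\rho(\cdot,t)\|_{L^q(\Tt^d)}.
\]
Integrating in time over $[\tau,T]$ and using the definition of the $L^\infty_tL^p_x$ norm,
\[
\int_\tau^T\!\int_{\Tt^d}\frac{\rho}{(m+\ep)^\af}\,dx\,dt\ \le\ \Big\|\frac1{(m+\ep)^\af}\Big\|_{L^\infty(0,T;L^p(\Tt^d))}\|\rho\|_{L^1(\tau,T;L^q(\Tt^d))}.
\]

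Combining the three bounds in the representation formula \eqref{7A2} yields \eqref{umax} with $C=C_1 T+\max_{\Tt^d}|u^T|$. The proof is essentially a bookkeeping exercise; the only point requiring care is the sign in the Lagrangian term, where A\ref{sub2} combined with the non-negativity of $H$ is crucial, since without a lower bound on $D_pH\cdot Du-H$ the argument would fail. There is no real obstacle here---the substantive difficulty is postponed to estimating $\|\rho\|_{L^1(\tau,T;L^q)}$, which will be addressed separately and combined with the $L^p$-bounds on $(m+\ep)^{-\af}$ from Theorems \ref{em0} and \ref{em}.
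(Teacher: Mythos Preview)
Your proof is correct and follows exactly the same approach as the paper: apply the representation formula \eqref{7A2}, use A\ref{sub2} together with $H\ge 0$ from A\ref{sub1} to bound the Lagrangian term from below, bound the terminal term using that $\rho(\cdot,T)$ is a probability density, and estimate the coupling term via H\"older's inequality. The paper's own proof is a one-line reference to these same ingredients; your version simply spells out the details.
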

\begin{proof}
Using A\ref{sub2} in \eqref{7A2}  and combining it with A\ref{sub1}, we get \eqref{umax}.

\end{proof}

\begin{cor}
\label{cor1}
Assume A\ref{sub1}-A\ref{sub2} hold. Then, for $q>1$ and any solution $(u^\ep,m^\ep)$ to Problem \ref{P2}, we have 
\begin{equation}
\label{7A3}
\int_\tau^T\int_{\Tt^d}H\rho\le C+C\Big\|\frac 1{(m+\ep)^\af}\Big\|_{L^\infty(0,T;L^p(\Tt^d))} \|\rho\|_{L^1(\tau,T;L^q(\Tt^d))},
\end{equation}
where $\rho$ solves \eqref{ADJ2} with initial data \eqref{ADJini2}.  
\end{cor}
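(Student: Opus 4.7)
The plan is to extract the $\int H\rho$ term from the representation formula \eqref{7A2} by using the coercivity of the Lagrangian provided by Assumption A\ref{sub2}, and then to control the remaining contributions on the right-hand side using Proposition \ref{plest} and H\"older's inequality. The argument is the same that yields \eqref{umax} in Proposition \ref{pram}, but carried out keeping the $H$--term instead of discarding it.

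First, I would rewrite \eqref{7A2} as
\[
\int_\tau^T\int_{\Tt^d}\bigl(D_pH\cdot D_xu-H\bigr)\rho\,dx\,dt
= u(x_0,\tau)+\int_\tau^T\int_{\Tt^d}\frac{\rho}{(m+\ep)^\af}\,dx\,dt
-\int_{\Tt^d} u^T\rho(x,T)\,dx.
\]
By Assumption A\ref{sub2}, $D_pH\cdot D_xu-H\ge \frac{1}{C_2}H-C_1$, so the left-hand side is bounded below by $\frac{1}{C_2}\int_\tau^T\!\!\int_{\Tt^d}H\rho\,dx\,dt-C_1\int_\tau^T\!\!\int_{\Tt^d}\rho\,dx\,dt$. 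Since $\rho$ satisfies the Fokker--Planck-type adjoint equation \eqref{ADJ2} with $\rho(\cdot,\tau)=\delta_{x_0}$, integration in $x$ shows that $\int_{\Tt^d}\rho(x,t)\,dx=1$ for every $t\in[\tau,T]$, so the term $C_1\int_\tau^T\!\!\int_{\Tt^d}\rho$ is bounded by $C_1T\le C$.

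Next, I would control the right-hand side. Proposition \ref{plest} yields $u(x_0,\tau)\le\max_{\Tt^d}u^T\le C$, and the smoothness of $u^T$ together with the mass conservation $\int_{\Tt^d}\rho(x,T)dx=1$ gives $\bigl|\int_{\Tt^d}u^T\rho(x,T)dx\bigr|\le\|u^T\|_\infty\le C$. Finally, for the singular term H\"older's inequality (in $x$, then integration in $t$) gives
\[
\int_\tau^T\!\!\int_{\Tt^d}\frac{\rho}{(m+\ep)^\af}\,dx\,dt
\le \Big\|\tfrac{1}{(m+\ep)^\af}\Big\|_{L^\infty(0,T;L^p(\Tt^d))}\,
\|\rho\|_{L^1(\tau,T;L^q(\Tt^d))},
\]
exactly as in Proposition \ref{pram}. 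Putting everything together and multiplying through by $C_2$ produces the desired estimate \eqref{7A3}.

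There is essentially no serious obstacle here: the statement is a by-product of the computation leading to \eqref{umax}, the only subtlety being the bookkeeping to keep the nonnegative term $H\rho$ on the opposite side from the singular coupling. In particular, both the upper bound on $u(x_0,\tau)$ from Proposition \ref{plest} and the mass conservation of $\rho$ are crucial; without A\ref{sub2}, one could not isolate $\int H\rho$ with a positive coefficient in the first place.
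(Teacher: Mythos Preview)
Your proof is correct and follows essentially the same approach as the paper: both use Assumption~A\ref{sub2} in the representation formula \eqref{7A2} to isolate $\int H\rho$ with a positive coefficient, control the singular term by H\"older's inequality, and then invoke Proposition~\ref{plest} to bound $u(x_0,\tau)$ from above. Your write-up simply spells out the intermediate steps (mass conservation of $\rho$, the bound on the terminal term) that the paper's two-line proof leaves implicit.
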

\begin{proof}
Let $p>1$ be given by $\dfrac 1p+\dfrac 1q=1$.
Using A\ref{sub2} in \eqref{7A2}, we get
\begin{align*}
  c\int_\tau^T\int_{\Tt^d}H\rho&\le C+u(x_0,\tau)+
\Big\|\frac 1{(m+\ep)^\af}\Big\|_{L^\infty(0,T;L^p(\Tt^d))}
\|\rho\|_{L^1(\tau,T;L^q(\Tt^d))}.
\end{align*}
To end the proof, we apply Proposition \ref{plest}.
\end{proof}
\begin{Proposition}
\label{prop2}
Assume that A\ref{sub1}-A\ref{sub2} hold. Then, for $0<\nu<1$ and any solution $(u^\ep,m^\ep)$ to Problem \ref{P2}, we have
\[
        \int_\tau^T\int_{\Tt^d}|D\rho^{\nu/2}|^2dx\,dt\le C+C\|Du^\ep\|^{2(\ga-1)}_{L^{\infty}(0,T;L^{\infty}(\Tt^d))},
\]
where $\rho$ solves \eqref{ADJ2} with initial data \eqref{ADJini2}.
\end{Proposition}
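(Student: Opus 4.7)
The plan is to test the adjoint equation \eqref{ADJ2} against $\rho^{\nu-1}$, use the chain rule to produce $|D\rho^{\nu/2}|^2$ as the coercive quantity, and handle the drift term by Young's inequality. Formally, multiplying \eqref{ADJ2} by $\rho^{\nu-1}$ and integrating by parts over $\Tt^d$ gives
\[
\frac{1}{\nu}\frac{d}{dt}\int_{\Tt^d}\rho^\nu \, dx = (1-\nu)\int_{\Tt^d}\rho^{\nu-2}|D\rho|^2\, dx + (1-\nu)\int_{\Tt^d}\rho^{\nu-1}D_pH \cdot D\rho \, dx.
\]
Since $\rho^{\nu-2}|D\rho|^2 = \frac{4}{\nu^2}|D\rho^{\nu/2}|^2$ and $1-\nu > 0$, the first term on the right is coercive in $D\rho^{\nu/2}$. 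Factoring $\rho^{\nu-1}D\rho = (\rho^{\nu/2-1}D\rho)\cdot\rho^{\nu/2}$ and applying Young's inequality, the drift integral is controlled by
\[
\frac{1}{2}\int_{\Tt^d}\rho^{\nu-2}|D\rho|^2 \, dx + \frac{1}{2}\int_{\Tt^d}\rho^\nu |D_pH|^2 \, dx,
\]
so that half of the coercive term is absorbed into the left-hand side.

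Next I would integrate in time over $[\tau, T]$, producing boundary terms $\int_{\Tt^d}\rho^\nu(\cdot, T)\, dx$ and $-\int_{\Tt^d}\rho^\nu(\cdot, \tau)\, dx$. The first is uniformly bounded via Jensen's inequality applied to the concave map $s\mapsto s^\nu$ together with the fact that $\rho(\cdot,t)$ is a probability density. The second is handled by approximating the Dirac initial datum $\delta_{x_0}$ with a smooth sequence $\rho_0^h \to \delta_{x_0}$, running the preceding computation for the regularized problem, and letting $h \to 0$; the estimates are $h$-independent, and by heat-kernel comparison $\int_{\Tt^d}\rho^\nu(\cdot, \tau+h)\, dx$ vanishes as $h \to 0$ (it decays like $h^{d(1-\nu)/2}$), so this contribution drops out in the limit. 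The outcome is
\[
\int_\tau^T\int_{\Tt^d}|D\rho^{\nu/2}|^2 \, dx\, dt \le C + C\int_\tau^T\int_{\Tt^d}\rho^\nu |D_pH|^2 \, dx\, dt.
\]

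To finish, I would use Assumption A\ref{sub1} to bound $|D_pH|^2 \le C + C|Du^\ep|^{2(\gamma-1)}$ pointwise, together with the uniform bound $\int_{\Tt^d}\rho^\nu \, dx \le C$ from Jensen. The right-hand side is then dominated by $C + C\|Du^\ep\|_{L^\infty(0,T;L^\infty(\Tt^d))}^{2(\gamma-1)}$, as claimed. The main obstacle is the rigorous justification of the formal test function $\rho^{\nu-1}$ near the singular initial time $t=\tau$, which I expect to be resolved by the approximation scheme together with the decay of $\int_{\Tt^d}\rho^\nu$ near $\tau^+$ described above.
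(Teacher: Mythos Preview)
Your proposal is correct and follows essentially the same approach as the paper: test the adjoint equation with $\rho^{\nu-1}$, use the chain rule to extract $|D\rho^{\nu/2}|^2$, and absorb the drift term via Young's inequality. Two minor remarks: first, your version of Young's inequality (keeping the weight $\rho^\nu$ on $|D_pH|^2$) is the more accurate one, though both yours and the paper's lead to the same final bound after invoking $\int_{\Tt^d}\rho^\nu\,dx\le 1$ and $|D_pH|^2\le C+C|Du^\ep|^{2(\gamma-1)}$; second, the boundary term $-\int_{\Tt^d}\rho^\nu(\cdot,\tau)\,dx$ has the favorable sign in the inequality, so it can simply be dropped as nonpositive---your approximation argument showing it vanishes is correct but not strictly needed.
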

\begin{proof}
Multiply \eqref{ADJ2} by $\nu\rho^{\nu-1}$ and integrate by parts. Then,
\begin{align}\notag
\int_{\Tt^d}(\rho^\nu(x,T)-\rho^\nu(x,\tau))dx\,&
+\nu(\nu-1)\int_\tau^T\int_{\Tt^d}\rho^{\nu-1}D_pH(x,Du)D\rho dx\,dt \\
&=\frac{4(1-\nu)}{\nu}\int_\tau^T\int_{\Tt^d}|D(\rho^{\nu/2})|^2 dx\,dt. 
\label{blabla1}
\end{align}
Because $\rho(\cdot,t)$ is a probability measure and $0<\nu<1$,
\begin{equation}
\label{rae1}
\int_{\Tt^d} \rho^\nu(x,t) dx\le 1.
\end{equation}
Moreover, we have the following estimate
\[\left|\int_\tau^T\int_{\Tt^d}\rho^{\nu-1}D_pH\cdot D\rho dx\, dt\right|
\le \frac 12\int_\tau^T\int_{\Tt^d}|D_pH|^2 dx\,dt+
\frac{2}{\nu^2}\int_\tau^T\int_{\Tt^d}|D(\rho^{\nu/2})|^2 dx\,dt.
\]
Thus,
\[\int_\tau^T\int_{\Tt^d}|D\rho^{\nu/2}|^2dx\,dt\le 
C+C\int_\tau^T\int_{\Tt^d}|D_pH|^2 dx\,dt\le 
C+C\|Du^\ep\|^{2(1-\ga)}_{L^{\infty}(0,T;L^{\infty}(\Tt^d))}.\]
\end{proof}
\begin{Lemma}\label{lem1}
Assume A\ref{sub1}-A\ref{sub2} hold. Also, suppose $\dfrac 2{2^*}<\tilde{\nu}<1$, $0<\kappa\le\tilde{\nu,}$ and let $q>1$ satisfy
\begin{equation}
  \label{eq:def-q}
  \frac{1}{q}=1-\kappa+\frac{2\kappa}{2^*\tilde{\nu}}.
\end{equation}
Then, for any solution $(u^\ep,m^\ep)$ to Problem \ref{P2} and any solution $\rho$ to \eqref{ADJ2} with initial data \eqref{ADJini2}, we have
$$\|\rho\|_{L^1(\tau,T;L^q(\Tt^d))}\le C+C\|Du^\ep\|^{2(\ga-1)}_{L^\infty(\Tt^d\times[0,T])}.$$
\end{Lemma}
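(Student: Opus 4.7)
The plan is to interpolate the $L^q$ norm of $\rho(\cdot,t)$ between its $L^1$ mass (which equals $1$ since $\rho$ is a probability measure) and a higher $L^r$ norm controlled via Sobolev embedding applied to $\rho^{\tilde\nu/2}$; the latter will then be bounded in $L^2_t$ by Proposition \ref{prop2}, and a Hölder step in time will yield the claimed $L^1_t L^q_x$ estimate.

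Set $r := 2^*\tilde\nu/2$. The assumption $\tilde\nu > 2/2^*$ ensures $r>1$, and the definition \eqref{eq:def-q} can be rewritten as
\[
\frac{1}{q} = \frac{1-\kappa}{1} + \frac{\kappa}{r},
\]
so standard $L^p$ interpolation gives
\[
\|\rho(\cdot,t)\|_{L^q(\Tt^d)} \le \|\rho(\cdot,t)\|_{L^1(\Tt^d)}^{1-\kappa}\,\|\rho(\cdot,t)\|_{L^r(\Tt^d)}^{\kappa} = \|\rho(\cdot,t)\|_{L^r(\Tt^d)}^{\kappa},
\]
using that $\rho(\cdot,t)$ is a probability density. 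Rewriting $\|\rho\|_{L^r} = \|\rho^{\tilde\nu/2}\|_{L^{2^*}}^{2/\tilde\nu}$ and applying Sobolev's embedding $H^1(\Tt^d) \hookrightarrow L^{2^*}(\Tt^d)$ yields
\[
\|\rho(\cdot,t)\|_{L^q(\Tt^d)} \le C\bigl(\|\rho^{\tilde\nu/2}(\cdot,t)\|_{L^2(\Tt^d)} + \|D\rho^{\tilde\nu/2}(\cdot,t)\|_{L^2(\Tt^d)}\bigr)^{2\kappa/\tilde\nu}.
\]
The $L^2$ term is controlled uniformly, since by Jensen's inequality (with $\tilde\nu<1$ and $\Tt^d$ of unit volume) $\int_{\Tt^d} \rho^{\tilde\nu}\,dx \le 1$.

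Next I integrate in time. Since $\kappa \le \tilde\nu$, the exponent $2\kappa/\tilde\nu$ lies in $(0,2]$, so Hölder's inequality on $(\tau,T)$ gives
\[
\int_\tau^T \|\rho(\cdot,t)\|_{L^q(\Tt^d)}\,dt \le C(T-\tau)^{1-\kappa/\tilde\nu}\left(\int_\tau^T \bigl(1+\|D\rho^{\tilde\nu/2}(\cdot,t)\|_{L^2(\Tt^d)}^2\bigr)dt\right)^{\kappa/\tilde\nu}.
\]
Applying Proposition \ref{prop2} (with $\nu = \tilde\nu$) bounds the bracket by $C + C\|Du^\ep\|^{2(\ga-1)}_{L^\infty(\Tt^d\times[0,T])}$, and raising this bound to the power $\kappa/\tilde\nu \le 1$ (and absorbing constants and the $(T-\tau)$ factor) delivers
\[
\|\rho\|_{L^1(\tau,T;L^q(\Tt^d))} \le C + C\|Du^\ep\|^{2(\ga-1)}_{L^\infty(\Tt^d\times[0,T])},
\]
as desired.

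The only delicate point is the bookkeeping between the three exponents $\tilde\nu$, $\kappa$, and $q$: one must verify that the interpolation exponent coming from \eqref{eq:def-q} is exactly $\kappa$ and that the resulting time exponent $2\kappa/\tilde\nu$ does not exceed $2$, which is precisely what the hypothesis $\kappa\le\tilde\nu$ provides. Everything else is a standard chain of Sobolev, Jensen, Hölder, and the gradient estimate already proved.
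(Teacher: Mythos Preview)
Your proof is correct and follows essentially the same route as the paper: interpolate $\|\rho\|_{L^q}$ between $\|\rho\|_{L^1}=1$ and $\|\rho\|_{L^{2^*\tilde\nu/2}}$, control the latter via Sobolev applied to $\rho^{\tilde\nu/2}$, integrate in time, and use the condition $\kappa\le\tilde\nu$ together with Proposition~\ref{prop2}. Your treatment is in fact slightly more explicit than the paper's, since you spell out the Jensen bound $\int_{\Tt^d}\rho^{\tilde\nu}\,dx\le 1$ for the lower-order Sobolev term and the final absorption of the exponent $\kappa/\tilde\nu\le 1$ into the stated power $2(\gamma-1)$.
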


\begin{proof}
Using  H\"older's inequality, we have
$$\left(\int_{\Tt^d}\rho^q dx\right)^\frac{1}{q}\le\left(\int_{\Tt^d}\rho dx\right)^{1-\kappa}
\left(\int_{\Tt^d}\rho^\frac{2^*\tilde{\nu}}{2}dx\right)^\frac{2\kappa}{2^*\tilde{\nu}}.$$
By Sobolev's Theorem, we have 
$$\left(\int_{\Tt^d}\rho^\frac{2^*\tilde{\nu}}{2}\right)^\frac{2\kappa}{2^*\tilde{\nu}}\le C
+C\left(\int_{\Tt^d}\big|D\bigl(\rho^\frac{\tilde{\nu}}{2}\bigr)\big|^2dx\right)^\frac{\kappa}{\tilde{\nu}},$$
and, therefore,
$$\int_\tau^T\left(\int_{\Tt^d}\rho^qdx\right)^\frac{1}{q}\le C+
C\int_\tau^T\left(\int_{\Tt^d}\big|D\bigl(\rho^\frac{\tilde{\nu}}{2}\bigr)\big|^2dx\right)^\frac{\kappa}{\tilde{\nu}}.$$
Because $\kappa\le\tilde{\nu}$, the previous computation and 
Proposition \ref{prop2} give that
$$\|\rho\|_{L^1(\tau,T;L^q(\Tt^d))}\le C\,+\,C\|Du^\ep\|_{L^\infty(\Tt^d\times[0,T])}^{2(\ga-1)},$$
which concludes the proof.
\end{proof}

\begin{cor}\label{cor2}
Assume A\ref{sub1}-A\ref{sub2} hold. Let $\dfrac 2{2^*}<\tilde{\nu}<1$ and select $0<\kappa\le\tilde{\nu}$ and $p>1,$ such that 
\begin{equation}
  \label{eq:def-p}
  \frac{1}{p}=\kappa-\frac{2\kappa}{2^*\tilde{\nu}}.
\end{equation} 
Then, for any solution $(u^\ep,m^\ep)$ to Problem \ref{P2} and any solution $\rho$ to \eqref{ADJ2} with initial data \eqref{ADJini2}, we have
$$\int_\tau^T\int_{\Tt^d}H\rho dx\,dt \le 
C+C\Big\|\frac 1{(m+\ep)^\af}\Big\|_{L^\infty(0,T;L^p(\Tt^d))}
\Bigl(1+\|Du^\ep\|_{L^\infty(\Tt^d\times[0,T])}^{2(\ga-1)}\Bigr).$$
\end{cor}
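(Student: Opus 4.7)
The plan is to derive Corollary \ref{cor2} by directly combining Corollary \ref{cor1} with Lemma \ref{lem1}. The first thing I would check is that the exponents defined by \eqref{eq:def-p} and \eqref{eq:def-q} are H\"older conjugates. A direct computation yields
\[
\frac{1}{p}+\frac{1}{q}=\Bigl(\kappa-\frac{2\kappa}{2^*\tilde{\nu}}\Bigr)+\Bigl(1-\kappa+\frac{2\kappa}{2^*\tilde{\nu}}\Bigr)=1,
\]
so the pair $(p,q)$ is admissible for Corollary \ref{cor1}. A quick auxiliary check ensures that $p>1$: since $\tilde{\nu}>2/2^*$ forces $1-2/(2^*\tilde{\nu})>0$, and $\kappa\le\tilde{\nu}<1$ keeps $\kappa(1-2/(2^*\tilde{\nu}))<1$, so $0<1/p<1$.

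With this compatibility in hand, Corollary \ref{cor1} applied with this specific $p$ gives
\[
\int_\tau^T\int_{\Tt^d}H\rho\,dx\,dt\le C+C\Big\|\frac 1{(m+\ep)^\af}\Big\|_{L^\infty(0,T;L^p(\Tt^d))}\|\rho\|_{L^1(\tau,T;L^q(\Tt^d))}.
\]
I would then feed the Lemma \ref{lem1} bound
\[
\|\rho\|_{L^1(\tau,T;L^q(\Tt^d))}\le C+C\|Du^\ep\|^{2(\ga-1)}_{L^\infty(\Tt^d\times[0,T])}
\]
into the previous display. Distributing the product and absorbing the pure constants (those not multiplied by $\|1/(m+\ep)^\af\|_{L^\infty(0,T;L^p(\Tt^d))}$) into the leading $C$, and the remainder into the coefficient of $\|1/(m+\ep)^\af\|_{L^\infty(0,T;L^p(\Tt^d))}$, produces exactly the claimed estimate.

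There is no genuine obstacle in this argument; the content of the corollary is essentially bookkeeping on top of Corollary \ref{cor1} and Lemma \ref{lem1}. The only subtlety worth flagging is that the same pair $(\tilde{\nu},\kappa)$ must be used in the definition \eqref{eq:def-p} of $p$ and, through the Lemma \ref{lem1} application, in the definition \eqref{eq:def-q} of $q$, so that the H\"older duality actually holds. This is precisely why the statement specifies both auxiliary parameters in its hypotheses.
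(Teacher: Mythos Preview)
Your proposal is correct and matches the paper's own proof, which simply states that the result follows by combining Corollary~\ref{cor1} with Lemma~\ref{lem1}. Your explicit verification that the exponents from \eqref{eq:def-p} and \eqref{eq:def-q} are H\"older conjugates is a welcome clarification of the only point that needs checking.
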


\begin{proof}
The result follows by combining Corollary \ref{cor1} with Lemma \ref{lem1}.
\end{proof}

\subsection{Proof of Theorem \ref{mainteo}}\label{liphj}

The proof of Theorem \ref{mainteo} relies on  Lipschitz bounds for the solutions of the Hamilton-Jacobi equation in \eqref{eq:mfgreg} that are uniform in $\epsilon$. Once we establish the Lipschitz regularity, the proof follows standard arguments in parabolic regularity theory. Thus, next, we combine the estimates from the preceding sections to obtain uniform bounds for $Du^\epsilon$ in $L^\infty(\Tt^d\times[0,T])$. 

\begin{Proposition}\label{prop}
Assume A\ref{sub1}-A\ref{gamma} hold and let $(u^\ep,m^\ep)$ solve Problem
\ref{P2}.
Then,
\[
\left\|Du^\ep\right\|_{L^\infty(\Tt^d\times[0,T])}\le C+
C\|Du^\ep\|^{2(\ga-1)}_{L^\infty(\Tt^d\times[0,T])}.\]
\end{Proposition}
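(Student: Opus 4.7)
The plan is to apply the nonlinear adjoint method to the spatially differentiated Hamilton--Jacobi equation. Differentiating the first equation in \eqref{eq:mfgreg} in $x_k$ shows that $v := u^\epsilon_{x_k}$ satisfies
\begin{equation*}
-v_t - \Delta v + D_pH(x, Du^\epsilon) \cdot Dv \,=\, -H_{x_k}(x, Du^\epsilon) + \alpha(m^\epsilon+\epsilon)^{-\alpha-1} m^\epsilon_{x_k}.
\end{equation*}
Pairing with the adjoint $\rho$ from \eqref{ADJ2}--\eqref{ADJini2}, integrating by parts in $x$ and $t$, and using that the adjoint equation annihilates the remaining terms and that $\rho(\cdot,\tau) = \delta_{x_0}$ produces the pointwise representation
\begin{equation*}
u^\epsilon_{x_k}(x_0, \tau) \,=\, \int_{\Tt^d} u^T_{x_k}\,\rho(\cdot,T)\,dx \,-\, \int_\tau^T\!\!\int_{\Tt^d} H_{x_k}\,\rho\,dx\,dt \,+\, \int_\tau^T\!\!\int_{\Tt^d} (m^\epsilon+\epsilon)^{-\alpha}\,\rho_{x_k}\,dx\,dt,
\end{equation*}
where one further integration by parts in $x_k$ has moved the derivative from $m^\epsilon$ onto $\rho$.

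I would then estimate the three terms separately. The boundary term is controlled by $\|Du^T\|_\infty$ because $\rho(\cdot, T)$ is a probability density. For the term with $H_{x_k}$, assumption A\ref{sub1} gives $|H_{x_k}| \le C_1 + C_2 H(x, Du^\epsilon)$, and Corollary \ref{cor2}, combined with Theorem \ref{em} (which furnishes uniform $L^\infty(0,T;L^p(\Tt^d))$-bounds for $(m^\epsilon+\epsilon)^{-\alpha}$ for every $p\ge 1$), controls it by $C + C\|Du^\epsilon\|_{L^\infty(\Tt^d\times[0,T])}^{2(\gamma-1)}$.

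The singular term $\int_\tau^T\!\!\int (m^\epsilon+\epsilon)^{-\alpha}\rho_{x_k}$ is the main obstacle. I would write $\rho_{x_k} = (2/\nu)\,\rho^{1-\nu/2}\partial_{x_k}\rho^{\nu/2}$ for some $\nu\in(0,1)$ close to $1$, and apply Cauchy--Schwarz in space-time to bound it by
\begin{equation*}
\Bigl(\int_\tau^T\!\!\int_{\Tt^d}(m^\epsilon+\epsilon)^{-2\alpha}\rho^{2-\nu}\,dx\,dt\Bigr)^{\!1/2}\Bigl(\int_\tau^T\!\!\int_{\Tt^d}|D\rho^{\nu/2}|^2\,dx\,dt\Bigr)^{\!1/2}.
\end{equation*}
Proposition \ref{prop2} bounds the second factor by $C + C\|Du^\epsilon\|_\infty^{\gamma-1}$. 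For the first factor, I would use H\"older's inequality to pair $(m^\epsilon+\epsilon)^{-2\alpha}$ against $\rho^{2-\nu}$, exploiting the arbitrarily high $L^\infty(0,T;L^p(\Tt^d))$-integrability of $(m^\epsilon+\epsilon)^{-\alpha}$ from Theorem \ref{em} and an $L^\infty(0,T;L^q(\Tt^d))$-type bound for $\rho$ obtained by Sobolev embedding of $\rho^{\nu/2}$ together with Proposition \ref{prop2}. Choosing $\nu$ close to $1$ and invoking A\ref{gamma} lets all exponents be balanced so that this factor is itself at most $C + C\|Du^\epsilon\|_\infty^{\gamma-1}$, whence the whole singular term is dominated by $C + C\|Du^\epsilon\|_\infty^{2(\gamma-1)}$.

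The main difficulty is precisely the simultaneous balancing of the H\"older and Sobolev exponents in the singular term: the fractional power $\rho^{\nu/2}$ needed to control $\rho_{x_k}$ forces $\rho^{2-\nu}$ into an exponent whose conjugate must match the integrability of $(m^\epsilon+\epsilon)^{-\alpha}$ available from Theorem \ref{em}, and A\ref{gamma} is exactly what guarantees that this bookkeeping closes. Once the pointwise estimate $|u^\epsilon_{x_k}(x_0, \tau)| \le C + C\|Du^\epsilon\|_\infty^{2(\gamma-1)}$ is established, taking the supremum over $x_0\in\Tt^d$, $\tau\in[0,T]$ and $k=1,\dots,d$ concludes the proof.
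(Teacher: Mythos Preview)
Your proposal is correct and follows essentially the same route as the paper. The paper's proof likewise differentiates the Hamilton--Jacobi equation in a fixed direction, pairs with the adjoint $\rho$, integrates the singular term by parts onto $\rho$, splits $D\rho$ as $\rho^{1-\tilde\nu/2}\,D\rho^{\tilde\nu/2}$, and closes via Proposition~\ref{prop2}, Theorem~\ref{em}, and Corollary~\ref{cor2}; the only cosmetic difference is that the paper applies a single triple H\"older inequality (with exponents $p,\tilde q,2$ in space and $\infty,2,2$ in time) and makes an explicit choice $\tilde\nu = \tfrac1p+\tfrac12+\tfrac1{2^*}$ with $p>d$, whereas you first apply Cauchy--Schwarz and then H\"older inside the resulting factor---these are algebraically equivalent decompositions yielding the same exponent for $\rho$.
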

\begin{proof}
For ease of notation, we omit the superscript $\ep$ and set $g=(m +\ep)^{-\af}$. 
By Theorem \ref{em0}, $\|g\|_{L^\infty(0,T;L^p(\Tt^d))}$ is uniformly bounded for any $p>1$.
Let $\rho$ solve \eqref{ADJ2} with initial data \eqref{ADJini2}.
We fix a unit vector $\xi\in\Rr^d$ and differentiate the first equation in 
\eqref{eq:mfgreg} in the $\xi$ direction. 
Next, we multiply the resulting equation by $\rho$ and \eqref{ADJ2} by $u_\xi$. By adding them and 
integrating by parts, we conclude that 
$$u_\xi(x_0,\tau)=\int_\tau^T\int_{\Tt^d}-D_\xi H\rho-g_\xi\rho dx\,dt+
\int_{\Tt^d}(u^T)_\xi\rho(x,T)dx.$$ 
Take $p>d$ and define $\tilde{q}, \tilde{\nu}$ by
\[\tilde{\nu}=\frac 1p+\frac 12+\frac 1{2^*},\quad
\frac{1}{\tilde{q}}+\frac 1p=\frac 12.\]
Thus,  $\tilde{\nu}<\dfrac 1d+\dfrac 12+\dfrac{d-2}{2d}=1$.
Letting $\kappa=\tilde{\nu}\dfrac d{d+p}$, we obtain
\eqref{eq:def-p}. 

Assumption A\ref{sub1} and Corollary \ref{cor2} imply 
\begin{align*}
\left|\int_\tau^T\int_{\Tt^d}-D_\xi H\rho dx\,dt\right|
&\le C+C\int_\tau^T\int_{\Tt^d}H\rho dx\,dt \\
&\le C+C \|g\|_{L^\infty(0,T;L^p(\Tt^d))}\bigl(1+\|Du^\ep\|_{L^\infty(\Tt^d\times[0,T])}^{2(\ga-1)}\bigr).
\end{align*}
Moreover, 
$$\left|\int_{\Tt^d}(u^T)_\xi\rho(x,T)dx\right|\le C,$$
because the preceding expression depends only on the terminal data and $\rho(\cdot, T)$ is a probability density. It remains to bound the term 
$$\int_\tau^T\int_{\Tt^d}g_\xi\rho dx.$$
Integration by parts gives the following estimate:
$$\left|\int_\tau^T\int_{\Tt^d}g_\xi\rho dx\,dt\right|\le 
C\|g\|_{L^\infty(0,T;L^p(\Tt^d))}\|\rho^{1-\frac{\tilde {\nu}}{2}}\|_{L^2(\tau,T;L^{\tilde{q}}(\Tt^d))}
\|D\rho^\frac{\tilde {\nu}}{2}\|_{L^2(\Tt^d\times[\tau,T])}.$$
From Proposition \ref{prop2}, it follows that
$$\|D\rho^\frac{\tilde {\nu}}{2}\|_{L^2(\Tt^d\times[\tau,T])}
\le C+C\|Du\|_{L^\infty(\Tt^d\times[0,T])}^{\ga-1}.$$ 
Moreover, setting $\theta=\dfrac{\tilde{\nu}}{2-\tilde {\nu}}$, we have
\begin{equation}\label{rest7}
\frac{1}{\tilde{q}\left(\frac{2-\tilde{\nu}}{2}\right)}=
1-\theta+\frac{2\theta}{2^*\tilde{\nu}}.
\end{equation}
Then
$$\left(\int_{\Tt^d}\rho^{\tilde{q}\left(\frac{2-\tilde{\nu}}{2}\right)}\right)^\frac{1}{\tilde{q}
\left(\frac{2-\tilde{\nu}}{2}\right)}\le \left(\int_{\Tt^d}\rho\right)^{1-\theta}
\left(\int_{\Tt^d}\rho^\frac{2^*\tilde{\nu}}{2}dx\right)^\frac{2\theta}{2^*\tilde{\nu}}.$$
Next, Sobolev's Theorem yields 
$$\left(\int_{\Tt^d}\rho^{\tilde{q}\left(\frac{2-\tilde{\nu}}{2}\right)}\right)^\frac{1}{\tilde{q}
\left(\frac{2-\tilde{\nu}}{2}\right)}\le C+C\left(\int_{\Tt^d}
\big|D\bigl(\rho^\frac{\tilde{\nu}}{2}\bigr)\big|^2\right)^\frac{\theta}{\tilde{\nu}}.$$
Consequently, 
$$\left(\int_{\Tt^d}\rho^{\tilde{q}\left(\frac{2-\tilde{\nu}}{2}\right)}\right)^\frac{2}{\tilde{q}}
\le C+C\left(\int_{\Tt^d}\big|D\bigl(\rho^\frac{\tilde{\nu}}{2}\bigl)\big|^2\right)
^\frac{(2-\tilde{\nu})\theta}{\tilde{\nu}}.$$
Taking into account that $\frac{(2-\tilde{\nu})\theta}{\tilde{\nu}}=1$ and using Proposition \ref{prop2}, we obtain the estimate
$$\|\rho^{1-\frac{\tilde{\nu}}{2}}\|_{L^2(\tau,T;L^{\tilde{q}}(\Tt^d))}\le C
+C\|Du\|_{L^\infty(\Tt^d\times[0,T])}^{\ga-1}.$$
The previous computation implies \[
\left|u_\xi(x,\tau)\right|\le C+C\|g\|_{L^\infty(0,T;L^p(\Tt^d))}
\bigl(1+\|Du\|^{2(\ga-1)}_{L^\infty(\Tt^d\times[0,T])}\bigr).
\]
Accordingly, the result follows from Theorem \ref{em}.
\end{proof}
\begin{cor}
Assume A\ref{sub1}-A\ref{gamma} hold. Then, there exists a constant, $C>0,$ that does not depend on $\epsilon, $ such that for any solution $(u^\ep,m^\ep)$ to Problem \ref{P2}, we have
\[
        \left\|Du^\ep\right\|_{L^\infty(\Tt^d\times[0,T])}\leq C.
\]
\end{cor}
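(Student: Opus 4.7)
The plan is to view the preceding Proposition \ref{prop} as a self-improvement inequality for the quantity $X := \|Du^\ep\|_{L^\infty(\Tt^d\times[0,T])}$, namely
\[
X \,\le\, C + C\,X^{2(\ga-1)},
\]
with $C > 0$ independent of $\ep$, and then close the argument by showing that the exponent $2(\ga-1)$ is strictly less than $1$ under Assumption A\ref{gamma}.

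First, I would verify that $2(\ga-1) < 1$. By Assumption A\ref{gamma}, $\ga < (d+2)/(d+1) = 1 + 1/(d+1) \le 3/2$ for every $d \ge 1$, so $\ga - 1 < 1/(d+1) \le 1/2$ and hence $2(\ga-1) < 2/(d+1) \le 1$. In particular, the self-bound above is subadditive in $X$.

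Next, I would apply Young's inequality with exponents $1/(2(\ga-1))$ and $1/(1-2(\ga-1))$ (both larger than $1$) to absorb the nonlinear term: given any $\eta \in (0,1)$, one has
\[
C\,X^{2(\ga-1)} \,\le\, \eta\,X + C_\eta,
\]
where $C_\eta$ depends only on $\eta$, $C$ and $\ga$. Choosing $\eta = 1/2$ and inserting this into the self-improvement inequality yields $X \le C + \tfrac{1}{2} X + C_{1/2}$, whence
\[
\|Du^\ep\|_{L^\infty(\Tt^d\times[0,T])} \,=\, X \,\le\, 2(C + C_{1/2}),
\]
which is a bound independent of $\ep$, as desired.

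No genuine obstacle is expected here: the work has already been done in Proposition \ref{prop} and in the a priori estimates of Sections \ref{hle}--\ref{sec:reg-adj} that feed into it. The only point requiring care is the use of Assumption A\ref{gamma} to guarantee the subquadratic (in fact, sublinear) growth of the right-hand side in $X$; this is precisely where the restriction $\ga < (d+2)/(d+1)$ is used crucially, and it explains the role of this assumption in the statement of Theorem \ref{mainteo}.
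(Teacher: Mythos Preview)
Your proposal is correct and is exactly the argument the paper has in mind: the corollary is stated without proof because it follows immediately from Proposition~\ref{prop} via the sublinearity of the exponent $2(\gamma-1)<1$, which is guaranteed by Assumption~A\ref{gamma}. Your use of Young's inequality to absorb $X^{2(\gamma-1)}$ into the left-hand side is the standard way to close this self-improvement, and the implicit finiteness of $X$ for each fixed $\epsilon$ is justified by the smoothness of solutions to Problem~\ref{P2}.
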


Finally, we prove our main result in the time-dependent case, Theorem \ref{mainteo}.

\begin{proof}[Proof of Theorem \ref{mainteo}]
Under the assumptions of the theorem,
the existence of smooth solutions to Problem \ref{P2}
follows from a standard argument, see, for example, \cite{GPM2}
or \cite{GPM3}.
In particular, strict convexity (see A\ref{sub1}) is essential for the uniqueness of a solution by the monotonicity technique -- see \cite{ll3}.  
Since $\|(m^\ep+\ep)^{-\af}\|_p$ is uniformly bounded for any
  $p>1$ and $\|Du^\ep\|_\infty$ is uniformly bounded, regularity
  theory applied to the first equation of \eqref{eq:mfgreg} implies that $\|u^\ep_t\|_p$ and
$\|D^2u^\ep\|_p$ are uniformly bounded for any $p>1$. Then, Morrey's inequality implies
that for $0<\be<1$, $\|u^\ep\|_{C^{0,\be}}$  is uniformly bounded.
As in \cite{GPM2}, we get that, for some $0<\be<1$,
$\|m^\ep\|_{C^{0,\be}}$ is uniformly bounded.

Next, using the Hopf-Cole transformation, $v^\ep=\ln(m^\ep+\ep),$
we have
\[v^\ep_t-D_pH\cdot Dv^\ep-\div(D_pH)=|Dv^\ep|^2+\De v^\ep.\]
Consequently,  as in \cite{GPM2}, we obtain that $v^\ep$ is uniformly bounded from below
and uniformly Lipschitz. Thus, $m^\ep$ is also uniformly bounded
away from zero and $(m^\ep)^{-\af}$ is uniformly Lipschitz.
Furthermore, through some subsequence, we have that $u^\ep\to u$, $m^\ep\to m$ in 
$C^{0,\be}(\Tt^d\times[0,T])$ as $\ep\to 0$. Hence,  $u$ is a
(viscosity)  solution of the first equation of \eqref{eq:mfg}. 
Furthermore, $m$ is a weak solution of the second equation in \eqref{eq:mfg}. 
As in \cite{GPM2}, we get uniform bounds in every Sobolev space for $(u^\ep,m^\ep)$.
Hence, $(u,m)$ satisfies the same estimates and, thus,  it is a classical solution. 
\end{proof}

\bibliography{mfg}
\bibliographystyle{plain}
\end{document}